\numberwithin{equation}{section}
\apptocmd{\thebibliography}{}{}{}
\theoremstyle{plain}
\newtheorem{theorem}{Theorem}[section]
\newtheorem{lemma}[theorem]{Lemma}
\newtheorem{proposition}[theorem]{Proposition}
\newtheorem{reminder*}{[theorem]Reminder}
\newtheorem{details*}[theorem]{Details}
\newtheorem{comm*}{Comment}
\newtheorem{example}[theorem]{Example}
\newtheorem{definition}[theorem]{Definition} 
\newtheorem{definition*}{[theorem]Definition}
\newtheorem{notation*}{Notation}
\newtheorem{assumption}[theorem]{Assumption}
\title{The adaptation property in non-equilibrium chemical systems}
  \author{E. Franco, J. J. L. Velázquez}
\begin{document}

\maketitle

\begin{abstract}
    The goal of this paper is to understand if the property of adaptation, which is a typical property of many biochemical systems, can be achieved only by biological systems that actively consume energy or if it can be achieved also by passive systems. 
   We prove that, unless the conserved quantities of a signalling system satisfy a very specific factorization assumption, adaptation cannot be achieved in a robust manner (i.e. in a stable manner under perturbations of the chemical reaction rates) by systems that satisfy the detailed balance property, hence that are passive, and that do not exchange substances or energy with the environment. 
   We also prove that robust adaptation can be achieved by systems that satisfy the detailed balance property, but exchange substances with the environment.
\end{abstract}

\textbf{Keywords:} non-equilibrium systems, adaptation property, detailed balance. 

\tableofcontents

\section{Introduction}
Many biochemical networks that can be found in the biological literature do not satisfy the detailed balance property.
In this sense, these biochemical networks operate in ”out of equilibrium” conditions and must spend energy in order to function. Many systems for which the detailed balance condition is not satisfied can be found, for instance, in \cite{alon2019introduction,milo2015cell,phillips2012physical}. 
Typical examples of systems that do not satisfy the detailed balance property are the models of chemotaxis and the kinetic proofreading models (see \cite{franco2024stochastic,hopfield1974kinetic}). 

It is then natural to ask which functions of a biological system require lack of equilibrium and which ones can be performed in equilibrium conditions. 
In other words, it is interesting to understand which biological functions require an active use of energy and/or fluxes of substances from/into the environment and which biological functions of the chemical network, instead, can take place in a passive manner.

In this paper we will consider in particular the so called \textit{adaptation} property. This property consists in the fact that many biochemical systems react to changes in a signal more than to the absolute values of the signal concentration. In other words, many biological systems respond to a stimulus, but after a transient time they return to the basal activity level that they had before the stimulus. 
One of the goals of this paper is to understand if the property of adaptation requires that the system is "out of equilibrium" (i.e. it spends energy) or if adaptation can be achieved in systems satisfying the detailed balance condition.
Notice that a systematic analysis of the network structures that yield the adaptation property has been done in \cite{ferrell2016perfect,ma2009defining}. In particular in 
\cite{ma2009defining} a computational analysis of a large class of networks satisfying the adaptation property has been performed. 
In \cite{ferrell2016perfect} several types of "logical structures" including activatory and inhibitory mechanisms yielding adaptation have been studied and it has been examined under which conditions adaptation can take place.

A classical example of biological model that exhibits the adaptation property is the model of bacterial chemotaxis studied by Barkai and Leibler (see \cite{barkai1997robustness}). Other biological systems that satisfy the adaptation property are many sensory systems, as for instance the visual sensory system (see \cite{segel1986mechanism,walz1987consequences}). 
Another example of model in which adaptation plays a role is mitogen signalling, i.e. the mechanism that induces or enhances cell division. The adaptation property in this case can avoid uncontrolled proliferation. 

The goal of this paper is to study the relation between detailed balance and the adaptation property described above.
To this end, we define a class of ideal \textit{signalling systems}. 
These are kinetic systems in which the concentration of one substance, that we call \textit{signal}, changes in time due to influxes and outfluxes of signal.
Therefore, the signal does not follow the dynamics imposed by the reactions in the kinetic system, instead its evolution is prescribed by some external boundary condition. We will denote the concentration of signal as $f$.
 Let $\Omega=\{ 1, \dots, N \} $ be the set of the substances in the network, then the change in time of the vector of the concentrations $n(t)=(n_1(t), \dots, n_N(t)) $ of the substances in the network is given by 
\begin{align}\label{eq:intro signalling}
    \frac{d n_i}{ dt} (t)&= J_i(n), \quad i \in \{ 2, \dots, N \} \\
  \frac{d n_1}{ dt} (t)&= J_1(n)+ J^{ext}(n) .  \nonumber
\end{align}
Here $J_i $ represent the net flux of chemicals at state $i \in \Omega$ due to the chemical reactions taking place in the network. Instead $J^{ext}(n)$ is the net flux of signal, i.e. of substance $(1)$, coming from the environment. In particular we assume that $ J^{ext}(n) $ is such that $n_1(t)=f(t) $. Hence the ODEs describing the change in time of the vector of the concentration in the  signalling system can be equivalently formulated as 
\begin{align*}
    \frac{d n_i}{ dt} (t)&= J_i(n), \quad i \in \{ 2, \dots, N \} \\
 n_1(t)&=f(t).  \nonumber
\end{align*}
If we ignore the external fluxes in \eqref{eq:intro signalling} we obtain a system of ODEs describing the change in time of the concentrations of substances in a kinetic system, i.e. 
\begin{equation} \label{eq:intro kinetic}
    \frac{d n_i}{ dt} (t)= J_i(n), \quad i \in \{ 1, \dots, N \}. 
\end{equation}
We refer to the kinetic system corresponding to \eqref{eq:intro kinetic} as the \textit{underlying kinetic system of a signalling system}. 

In this work we assume that the concentration of signal tends to constant values as time tends to infinity. 
It is reasonable to expect that changes in the concentration of the signal yield changes in some of the concentrations of other substances in the signalling system. In particular, the aim is to study how the concentration of a specific substance changes in time after changes in the signal concentration.
We refer to this substance as the \textit{product} of the signalling system and the change in the concentration of product is the response of the signalling system to the changes in the signal concentration. 

The signalling systems that we consider in this paper are simple models that mimic the usual structure of biological signalling systems. 
One example of simple signalling system is the two-component signalling pathway. This pathway consists of two steps, as a first step a ligand binds to a cell receptor. The complex ligand receptor then undergoes a sequence of chemical reactions, that usually requires consumption of  molecules (for instance ATP), and finally forms a product (we refer to \cite{milo2015cell} for more details on these models).

In this paper we will study signalling systems that are in contact with a reservoir of temperature and, as a consequence, whose temperature is constant. Moreover, we assume that the signalling systems that we study are endowed with mass action kinetics. 
The reason for this assumption is that other types of kinetics, as the Michaelis-Menten kinetics, can be derived as limit of systems endowed with mass action kinetics (see \cite{goldbeter1981amplified,segel1989quasi}).
In Section \ref{sec:examples} we will discuss a limit of this type for a particular example of model of adaptation.

The reason why we introduce signalling systems is because we want to define the adaptation property for these types of systems. More precisely, we say that a signalling system satisfies the adaptation property if the system reacts to changes in the signal, but, after a transient time, the concentration of product returns to the pre-signal value (i.e. the concentration of product that we would have at the steady state if we ignore the influxes and outfluxes of chemicals, i.e the steady states of \eqref{eq:intro kinetic}).  See Figure \ref{fig1} for a graphical representation of the adaptation property. 
As we will see later in Section \ref{sec:adaptation}, in order to study the relation between the detailed balance property and the adaptation property it is important also to study the form of the conserved quantities of the system that we are analysing. In this context, we say that a kinetic system has a conserved quantity if the sum of a certain number of molecules, or a set of functional groups composed by several substances, remains constant during all the  reactions taking place in the network.  
In this paper we say that a signalling system is \textit{closed} if the underlying 
kinetic system satisfies the detailed balance property and it does not exchange substances with the environment (and therefore, there exists a conserved quantity involving all the substances in the network). 

Notice that in order to satisfy the adaptation property a kinetic system must satisfy two main properties: the signalling system must react to changes in the signal and after a transient time should return to the original state.
We study these two properties in detail. 
Given a chemical network we can generate a graph having as vertices all the substances of the chemical network and where the substances belonging to the same reactions are connected. If this graph is connected, then there is a non trivial response to changes in the signal concentration values, unless the chemical rates characterizing the chemical network are fine tuned (i.e. the chemical rates are contained in manifolds with lower dimensionality compared to the one of the space of parameters).  
This allows us to prove that, if a kinetic system is connected, it satisfies the detailed balance property and if there exists a substance that does not appear in any conserved quantity, i.e. the kinetic system exchanges chemicals with the environment, then the adaptation property holds unless the parameters are fine tuned. 
Therefore, this gives a mechanism that allows to obtain robust adaptation (i.e. the adaptation property remains valid under small changes of the chemical rates) when the detailed balance property holds, but the  system is open. 

On the contrary, if we assume that the kinetic system satisfies the detailed balance property and there is no exchange of substances between the kinetic system and the environment (i.e. if we assume that the kinetic system is closed) and the conserved quantities satisfy a suitable non-factorization property, then the adaptation property does not hold, unless the parameters are fine tuned.
The factorization condition mentioned above imposes a very strong constraint to the form of the conserved quantities as it implies that it is possible to separate the substances of the system in classes that do not exchange chemicals via the reactions taking place in the network. 
As we will explain in Section \ref{sec:adaptation in closed systems} the factorization property of the conserved quantity is unlikely to happen in real biological systems. 
Summarizing, in this paper we prove that a kinetic system must exchange mass or energy with the environment in order to satisfy the adaptation property in a robust manner, hence the adaptation property holds also if we perturb the chemical rates. 

It is worth to mention here that the possibility of having adaptation in chemical systems satisfying the detailed balance property was considered in \cite{segel1986mechanism} and the analysis in there was extended further in \cite{walz1987consequences}.
In these papers it was exhibited a chemical network yielding the adaptation property. This network satisfies the detailed balance property and does not exchange substances with the environment, hence it is closed. 
However the mechanism found in \cite{segel1986mechanism,walz1987consequences} requires a fine tuning of the parameters and, therefore, the results in these papers do not contradict the statement above concerning the impossibility of having adaptation in closed systems in a robust manner. 
 Hence there is no contradiction between the results obtained in our paper and the results in \cite{segel1986mechanism, walz1987consequences}.  
 Notice that, as explained also in \cite{barkai1997robustness} the robustness of the adaptation property is required in order to guarantee the proper biological functioning of the signalling system.

 In this paper we are interested in analysing the relation between the detailed balance property and the property of adaptation. The detailed balance property is a property that fails for all the chemical networks that contain a reaction that is one directional, i.e. that contain at least one reaction of the form $A \rightarrow B $. 
 As a consequence in this paper we focus on the analysis of bidirectional chemical networks, i.e. of chemical networks that contain only reactions of the form $A\leftrightarrows B$.  
Notice, however, that many chemical networks in the literature contain only one directional reactions of the form $A \rightarrow B $. In Section \ref{sec:examples} we explain in a concrete example how these one directional reactions can be seen as limits of bidirectional reactions for which the detailed balance property holds. 
We stress that this problem has some similarities with the study in \cite{gorban2011extended}, where it is proven that one directional reactions can be obtained as limits of bidirectional reactions for which the detailed balance property holds.

We stress that, as explained in \cite{franco2025reduction}, the detailed balance is a property that, at the fundamental level, must be satisfied by any chemical networks.
In \cite{franco2025reduction} we have studied the properties of chemical networks that are obtained by freezing the concentration of some of the substances in a chemical network that satisfies the detailed balance property. 
In particular in \cite{franco2025reduction} we prove that these chemical networks, in which a set of substances are kept at constant values by the exchange of chemicals with the environment, do not satisfy the detailed balance, unless certain topological conditions are satisfied or the frozen concentrations are chosen at equilibrium values.

In this paper we prove that robust adaptation is impossible in closed systems, i.e in systems that satisfy the detailed balance property and do not exchange substances with the environment.
Notice also that this is coherent with the results in the literature, where models of robust adaptation do not satisfy the detailed balance property. 
It is therefore natural to ask the question whether the models of adaptation are admissible and can be obtained by freezing the concentration of certain substances in a chemical network that satisfies the detailed balance property. 
In Section \ref{sec:examples} we will show that this is the case for some models of adaptation that can be found in the literature.

\begin{figure}[H] 
\centering
\includegraphics[width=0.8\linewidth]{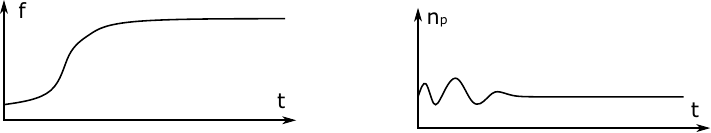}
\caption{
On the left we plot the function $f$, describing the change in time of the concentration of signal. 
The function $f$ tends to a constant value as $t\rightarrow \infty $. 
On the right we plot $n_p(t)$, which is the concentration of product, for a signalling system that satisfies the adaptation property. Notice that the concentration of product changes as the signal changes. However it returns to the pre-signal values as time tends to infinity. 
}\label{fig1}
\end{figure}

 \bigskip 

\textbf{Main results} 
 \bigskip

 For the convenience of the reader we state here the three main results of this paper in an informal way. 
The goal of this paper is to study the relation between the property of adaptation and the property of detailed balance. 
The main result that we prove is the fact that robust adaptation does not take place in closed systems unless the conserved quantities satisfy a very specific assumption that we refer to as \textit{factorization assumption}. In this paper we say that the adaptation property is robust (or stable) if the property remains true under small changes of the reaction rates. 
As a consequence, when the adaptation property is not robust, the chemical rates are fine tuned.  

Before stating the main results of this paper let us state in an informal way the three properties that are required in order to have adaptation. 
A signalling system satisfies the adaptation property if the following three conditions are satisfied 
\begin{enumerate}
    \item the vector of the concentrations of the substances in the system converges to a steady state as time tends to infinity; 
    \item changes in the signal concentrations produce changes in the concentration of the product (i.e. if the function $f$ is not constant, then we have that the concentration of product $n_p$ is also not constant); 
    \item after a transient time, the concentration of product returns to the pre-signal values, i.e. to the values that it had at time $t=0$, which is $n_0^T e_p$. 
\end{enumerate}
The most important result of this paper is the following. 
       \begin{theorem}[Stable adaptation is impossible in closed systems] \label{thm:intro ad in closed}
       Assume that a signalling system is closed. 
    Then the signalling system does not satisfy the adaptation property unless the reaction rates are fine tuned or the conserved quantities satisfy a factorization assumption. 
    \end{theorem}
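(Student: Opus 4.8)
The plan is to reduce the three-part adaptation property to a single algebraic condition on the steady state of the system with the signal frozen, and then to show that this condition can hold robustly only when the conserved quantities factorize. First I would observe that, since the signal obeys $n_1(t)=f(t)$ and $f$ tends to a constant $f_\infty$, condition (1) forces the remaining concentrations $(n_2,\dots,n_N)$ to converge to the steady state $n^{fr}(f_\infty)$ of the frozen system $J_i(n)=0$ ($i\ge 2$), $n_1=f_\infty$. The pre-signal value $n_0^{T}e_p$ is exactly the product concentration at the equilibrium of the underlying kinetic system \eqref{eq:intro kinetic}, which coincides with $n^{fr}(f_0)$ for the initial value $f_0=f(0)$. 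Hence, writing $P(f):=e_p^{T}n^{fr}(f)$, condition (3) is equivalent to $P(f_\infty)=P(f_0)$, and robustness (stability under perturbations of both the signal and the rates) forces $P$ to be constant, i.e. $P'(f)=0$. Condition (2), by contrast, demands that the transient of $n_p$ be non-constant; by the connectivity discussion recalled in the Introduction this holds off a lower-dimensional set of rates. The whole problem is therefore to decide when $P'(f)\equiv 0$.

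Next I would compute $P'(f)$. Because the underlying system satisfies detailed balance, its positive equilibria are the thermodynamic equilibria: $\log n^{eq}$ lies in an affine translate of the orthogonal complement $S^{\perp}$ of the stoichiometric subspace $S$, and on each compatibility class the equilibrium is unique. Freezing $n_1=f$ removes from $S^{\perp}$ exactly the conservation laws that involve the signal — closedness guarantees there is at least one such law — so the surviving reduced conservation laws have zero signal component and the corresponding conserved quantities do not depend on $f$. Differentiating $J_i(n^{fr}(f))=0$ ($i\ge 2$) with respect to $f$ and using $dn_1^{fr}/df=1$ gives, on the reduced stoichiometric subspace $S'$,
\[ \frac{dx^{fr}}{df}=-\big(J_{xx}\big|_{S'}\big)^{-1}J_{x1},\qquad P'(f)=-\,e_p^{T}\big(J_{xx}\big|_{S'}\big)^{-1}J_{x1}, \]
where $J_{xx}=(\partial_j J_i)_{i,j\ge2}$ and $J_{x1}=(\partial_1 J_i)_{i\ge2}$ are evaluated at $n^{fr}(f)$. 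The detailed-balance structure enters here: at the baseline equilibrium $J$ is self-adjoint for the inner product weighted by $1/n^{eq}_i$, so $J_{xx}|_{S'}$ is negative definite, hence invertible, and the above expression is a well-defined rational function of the reaction rates.

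I would then analyse the numerator of $P'(f)$, a polynomial in the rate constants built from the cofactors of $J_{xx}$ and the signal-coupling vector $J_{x1}$. The claim is a dichotomy: either this polynomial is not identically zero, in which case $P'(f)=0$ only on a proper subvariety of rate space — the fine-tuning alternative — or it vanishes identically in the rates, which I would show is equivalent to the conserved-quantity matrix admitting the block decomposition of the factorization assumption, namely a splitting of $\Omega$ into classes, with signal and product in different classes, that exchange no mass through the reactions. In the factorized case $J_{x1}$ lies in a block separated from $e_p$ and $P'\equiv 0$ trivially; conversely, if no such splitting exists I would produce rates making the numerator nonzero, e.g. by exhibiting a reaction path from signal to product surviving the cofactor expansion. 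Combining this with the genericity of condition (2) yields the theorem: for a closed system, $P'\equiv 0$ together with a non-trivial response is impossible unless the rates lie on the fine-tuning subvariety or the conserved quantities factorize.

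The main obstacle I anticipate is twofold. First, the frozen system generically does not itself satisfy detailed balance — as soon as $f\ne f_0$ its steady state $n^{fr}(f)$ carries nonzero cyclic fluxes — so the clean thermodynamic characterization applies only at the baseline $f_0$ and must be propagated to nearby $f$ by perturbation, or replaced by the reduction analysis of \cite{franco2025reduction}; controlling $J_{xx}|_{S'}$ and its inverse along the whole family $n^{fr}(f)$ is the delicate point. Second, proving the algebraic dichotomy — that the numerator of $P'$ vanishes identically exactly under factorization — requires a careful matrix-tree and cofactor argument relating the combinatorics of the reaction graph to the block structure of $S^{\perp}$, and this is where the precise formulation of the factorization assumption has to be pinned down.
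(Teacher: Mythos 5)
Your reduction of adaptation to the condition $P'(f)\equiv 0$ is the right starting point and matches the paper's strategy in spirit, but the route you choose for computing $P'$ — implicit differentiation of the frozen steady-state equations, giving $P'(f)=-e_p^{T}\bigl(J_{xx}|_{S'}\bigr)^{-1}J_{x1}$ — imports the full kinetic Jacobian and with it all the difficulties you list at the end. For a \emph{closed} system this is unnecessary, and your first ``anticipated obstacle'' is in fact not an obstacle: Proposition \ref{prop:property 1 adaptation} shows that the limit state of the frozen dynamics is a genuine detailed-balanced equilibrium $e^{-E}$ (the Lyapunov argument forces all fluxes, not just the reduced ones, to vanish in the limit), so there are no residual cyclic fluxes to control. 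Consequently the final state depends on the signal history only through the scalar $\overline J_1^F$ and is pinned down by the conservation laws $m_k^{T}e^{-E}-m_k^{T}n_0=m_k(1)\,\overline J_1^F$ alone. Differentiating \emph{these} relations with respect to $\overline J_1^F$ (writing $E=E_0+\sum_i m_i\eta_i$) yields a linear system governed by the $L\times L$ matrix $\mathcal D(\zeta)=\sum_{j}\zeta_j\, m^{(j)}\otimes m^{(j)}$ with $\zeta=e^{-E}$, and the adaptation condition becomes $\langle m^{(p)},\mathcal D(\zeta)^{-1}m^{(1)}\rangle=0$. The response of the product therefore depends on the rates only through the equilibrium $\zeta$, so the object you propose to study as ``a polynomial in the rate constants built from the cofactors of $J_{xx}$'' collapses to a function of $\zeta$ alone; the matrix-tree/cofactor machinery never enters.

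The genuine gap is that the heart of the theorem — the dichotomy ``the response vanishes robustly if and only if the conserved quantities factorize'' — is exactly the step you defer to your second ``anticipated obstacle'' and never carry out. Asserting that identical vanishing of the numerator is equivalent to a block splitting of $\Omega$ is the whole content of the result, and your sketch gives no mechanism for the forward implication (robust vanishing $\Rightarrow$ factorization). The paper proves it by introducing an equivalence relation $i\sim j$ when $\langle m^{(i)},\mathcal D(x)^{-1}m^{(j)}\rangle\neq 0$ for some $x$ arbitrarily close to $\zeta$; transitivity follows from the identity $\partial_{\zeta_j}\mathcal D(\zeta)^{-1}=-\mathcal D(\zeta)^{-1}\bigl(m^{(j)}\otimes m^{(j)}\bigr)\mathcal D(\zeta)^{-1}$, and the existence of more than one equivalence class is shown to force a direct-sum decomposition $W_1\oplus W_2=\R^L$ under which every extreme ray of $\mathcal M_+$ is supported on one of two disjoint subsets of $\Omega$ — i.e.\ the factorization. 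Without an argument of this type (or your promised combinatorial substitute, fully executed), the proposal establishes only the easy direction and the fine-tuning alternative, not the theorem.
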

The rigorous statement of the above result is Theorem \ref{thm:no adaptation in closed systems}. The factorization assumption mentioned in the theorem above, that will be defined later in Section \ref{sec:adaptation}, implies a very particular structure of conserved quantities that might not be admissible in concrete models.
The factorization assumption states that we can write every conservation law as the weighted sum of $k$ conservation laws $ m_i=(m_{j_i})$ for $i =1 \dots k $, that
divide the set of the substances  $\Omega $ in disjoint subsets, i.e. $ \Omega = \cup_{i =1}^k \Omega_i$.
More precisely these conservation laws are of the following form
\[
\sum_{j_i \in \Omega_i} m_{j_i } n_{j_i}(t)=\sum_{j_i \in \Omega_i} m_{j_i } n_{j_i}(0) , \quad \forall t \geq 0,  \quad  \forall i \in \{1, \dots, k \}
\]
where the vector $n$ is the solution to the system of ODEs \eqref{eq:intro signalling}.

Moreover in this paper we also find a mechanisms yielding adaptation in signalling systems whose underlying kinetic system satisfies the detailed balance property. 
Indeed, the first property required in order to have adaptation follows by the fact that when the kinetic system underlying the signalling system satisfies the detailed balance property and when the function $f$, describing the change in time of the signal concentration, converges to a constant value sufficiently fast, then the vector of the concentrations $n(t)$ of the signalling system converges to a steady state. This will be explained in detail in Section \ref{sec:signalling}. 
The second property required to have adaptation is that the system should respond to changes of the signal concentration, this is analysed in Section \ref{sec:response} when the kinetic system underlying the signalling system satisfies the detailed balance property. In particular we prove Theorem \ref{thm:reaction}, that we state here in an informal manner. 
       \begin{theorem}[Response in a connected system]\label{thm:intro response}
   Consider a connected signalling kinetic system. Assume that the underlying kinetic system satisfies the detailed balance property. Then changes in the signal concentration produce changes in the concentrations of all the substances in the system, unless the parameters are fine tuned. 
    \end{theorem}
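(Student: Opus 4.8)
The plan is to reformulate the response property as a statement about the steady state of the clamped system and then to show that this steady state depends nontrivially on the clamped signal level. By the convergence result of Section \ref{sec:signalling} (property 1 above), when $f(t)\to c$ and the underlying kinetic system satisfies detailed balance, the concentration vector converges to a steady state $n^*(c)$ characterized by $n_1^*(c)=c$ together with $J_i(n^*(c))=0$ for $i\in\{2,\dots,N\}$, taken inside the stoichiometric compatibility class fixed by the conserved quantities of the closed system that do not involve substance $1$ (the ones involving substance $1$ are no longer conserved once the signal is driven externally). I would then encode ``changes in the signal produce changes in every substance'' as the assertion that, for each $i$, the map $c\mapsto n_i^*(c)$ is non-constant, i.e. $\frac{dn_i^*}{dc}\neq 0$; since a non-constant $f$ settling at a value $c\neq n_1^0$ then forces $n_i(\infty)\neq n_i(0)$, this yields a genuine response, and the same Jacobian computation shows that the associated transfer function is a nonzero rational function of the Laplace variable, so even a signal returning to its initial level elicits a non-constant transient.

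First I would differentiate the steady-state relations in $c$. Writing $\hat n=(n_2,\dots,n_N)$, $A_{ij}=\frac{\partial J_i}{\partial n_j}(n^*)$ for $i,j\in\{2,\dots,N\}$ and $\beta_i=\frac{\partial J_i}{\partial n_1}(n^*)$, the chain rule gives $A\,\frac{d\hat n^*}{dc}=-\beta$ on the tangent space of the compatibility class. The convergence-to-a-unique-steady-state result of Section \ref{sec:signalling} guarantees this steady state is asymptotically stable, so the reduced Jacobian $A$ restricted to that subspace is invertible; equivalently, detailed balance supplies a free-energy Lyapunov function whose Hessian renders $A$ negative definite there. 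Hence $\frac{dn_i^*}{dc}=-(A^{-1}\beta)_i$, which by Cramer's rule is a ratio of determinants, and thus an analytic (indeed rational) function of the reaction rate constants. The response of substance $i$ therefore vanishes exactly on the zero set of a single function $P_i$, namely the determinant of $A$ with the column corresponding to substance $i$ replaced by $\beta$.

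The heart of the argument, and the step I expect to be the main obstacle, is to show that each $P_i$ is not identically zero, for then its zero set is a proper analytic subvariety of parameter space — precisely the ``fine-tuned'' set — and outside it every substance responds. Here I would invoke the connectivity of the reaction graph: since there is a path of reactions joining substance $1$ to substance $i$, a perturbation of the signal must be able to propagate to $i$ for at least one choice of rates. I would make this rigorous by exhibiting a convenient degeneration in which the propagation cannot cancel — for instance scaling the rate constants hierarchically along a spanning tree rooted at the signal so that the network collapses to a single monomolecular chain $1\leftrightarrows\cdots\leftrightarrows i$, for which the reversible harmonic computation gives $\frac{dn_i^*}{dc}=\pi_i/\pi_1>0$ and hence $P_i\neq 0$ at that parameter point. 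The genuine difficulty is that for multimolecular reactions the reduced Jacobian is not a Metzler matrix, so the maximum-principle argument that forces a definite sign in the monomolecular case fails and exact cancellations along parallel pathways really can occur; this is exactly why the conclusion can only be generic rather than universal. I would therefore isolate a single path from $1$ to $i$ and argue that the monomial it contributes survives uncancelled in the determinant $P_i$, which certifies non-identical vanishing.

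Finally, taking the finite union over $i\in\{2,\dots,N\}$ of the subvarieties $\{P_i=0\}$ gives again a proper analytic subvariety of parameter space; for all rate constants outside it, $\frac{dn_i^*}{dc}\neq 0$ for every $i$, so a non-constant signal produces a non-constant response in every substance, which is the ``unless the parameters are fine tuned'' conclusion of the theorem.
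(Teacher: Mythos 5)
Your proposal takes a genuinely different route from the paper, but it has a gap that is fatal for the theorem as it is actually used. You encode ``response'' as non-vanishing of the steady-state sensitivity $\frac{dn_i^*}{dc}$ of the clamped system. However, the rigorous version of this statement (Theorem \ref{thm:reaction}) asserts property 3 of Definition \ref{def:adaptation}, namely $\sup_{t>0}|n_p(t)-n_0^Te_p|>0$, which is a statement about the \emph{transient}, not about the limiting steady state. These two notions genuinely come apart here: in the paper's main application (Proposition \ref{prop:DB and adaptation}), the product $p$ is chosen so that $m(p)=0$ for every conservation law, which forces \emph{every} steady state of the detailed-balanced system to have the same $p$-component. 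For such a substance your quantity $P_p$ is identically zero on the whole (open) parameter space --- no perturbation of the rates can make $\frac{dn_p^*}{dc}\neq 0$ --- so your central claim that ``each $P_i$ is not identically zero'' is false, and your argument proves nothing precisely for the substance where the response is needed. (Your degeneration to a monomolecular chain does not rescue this: removing reactions changes the span of the stoichiometric vectors and hence the conservation structure, so the limit of $P_p$ along the interior of parameter space is not the $P_p$ of the chain.) You do gesture at the correct fix --- that the transfer function of the linearized dynamics is nonzero even when its value at $s=0$ vanishes --- but you do not develop it, and developing it is essentially the entire content of the paper's proof: the paper expands the linearized solution for small times, shows that $\varphi_\ell(t)\sim c_{n+1,\ell}\,t^{n+1}$ where $n$ is the graph distance of $\ell$ from the signal and $c_{n+1,\ell}$ is a sum over paths of products of Jacobian entries, and then establishes genericity of $c_{L,p}\neq 0$ by perturbing the rates (and energies, so as to preserve detailed balance) along a single shortest path and computing a mixed partial derivative that isolates that path's contribution.

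A secondary remark: even where your steady-state criterion is the right one, your genericity step is under-specified. The admissible perturbations must preserve detailed balance, so you cannot vary the rate constants freely; the paper is careful to perturb the pair $(K_R,K_{-R})$ consistently with a perturbed energy vector $\overline E=E+\varepsilon$. Your claim that convergence of the dynamics implies invertibility of the reduced Jacobian also deserves justification (it does hold here, via positive definiteness of the Onsager form $\sum_R K_R\alpha_R\,R\otimes R$ on the stoichiometric subspace, but not merely because the steady state is attracting).
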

It turns out that if a connected signalling system satisfies the detailed balance property (i.e. if the underlying kinetic system satisfies the detailed balance property) and exchanges one substance with the environment, then the third property required to have adaptation holds. This is the content of Proposition \ref{prop:DB and adaptation}. 
As a consequence we have the following statement, formulated informally here.
      \begin{theorem}[Adaptation in kinetic systems with the detailed balance property]\label{thm:intro db and adaptation}
      Consider a connected signalling kinetic system. Assume that the underlying kinetic system satisfies the detailed balance property and is not conservative (i.e. it exchanges at least one substance with the environment). Then, unless the parameters are fine tuned, the adaptation property holds. 
    \end{theorem}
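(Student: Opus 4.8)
The plan is to verify, one at a time, the three conditions in the definition of adaptation stated above, since each of them has already been isolated as a separate result earlier in the paper. The theorem then amounts to showing that these three conditions can be made to hold \emph{simultaneously}, on the complement of a lower-dimensional ``fine-tuned'' set of reaction rates.

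First I would establish condition (1) by invoking the convergence result from Section \ref{sec:signalling}: since the underlying kinetic system satisfies the detailed balance property and the signal $f$ converges to a constant value sufficiently fast, the trajectory $n(t)$ of \eqref{eq:intro signalling} converges to a steady state. Detailed balance enters here through the free-energy (relative-entropy) Lyapunov functional of the underlying kinetic system, which, once the boundary value of the signal has stabilized, drives the remaining concentrations to equilibrium; no fine-tuning is required at this step. Next, condition (2) is exactly Theorem \ref{thm:reaction}: connectedness together with detailed balance forces a non-constant signal to produce a non-constant response in every substance, in particular in the product $n_p$, unless the reaction rates lie in a proper lower-dimensional subset $\mathcal{F}_2$. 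Finally, condition (3) is supplied by Proposition \ref{prop:DB and adaptation}: for a connected detailed-balance system that exchanges at least one substance with the environment, the product coordinate of the long-time steady state coincides with the pre-signal value $n_0^T e_p$. This is where the hypothesis of non-conservativeness is essential: the absence of the conservation law that would otherwise involve the exchanged substance is precisely what prevents the perturbation induced by the signal from permanently shifting the product's equilibrium, whereas in a closed system this extra conserved quantity would pin the final product concentration to a value different from its initial one.

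I would then combine the three steps: on the complement of $\mathcal{F}_2$, together with any exceptional set arising in condition (3), which is still the complement of a lower-dimensional set and hence generic, conditions (1)--(3) hold at once, and so the adaptation property holds.

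The step I expect to be the main obstacle is the compatibility of conditions (1) and (3): I must check that the steady state whose \emph{existence} is guaranteed by condition (1) is precisely the steady state whose product coordinate is \emph{computed} in condition (3), and that the response established in condition (2) is genuinely transient, i.e. that it decays back to this common limit rather than persisting. Making this identification precise requires tracking how the steady state selected by the long-time dynamics depends on the (now absent) conservation laws, and verifying that the exceptional sets coming from the separate conditions are all lower-dimensional, so that their union can still be avoided by generic reaction rates.
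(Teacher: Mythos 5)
Your decomposition is exactly the one the paper uses: the rigorous counterpart of this statement is Proposition \ref{prop:DB and adaptation}, whose proof establishes convergence via Proposition \ref{prop:property 1 adaptation}, the response via Theorem \ref{thm:reaction}, and the return to baseline from non-conservativity. (Note that citing Proposition \ref{prop:DB and adaptation} itself for the third condition is circular, since that proposition \emph{is} the rigorous form of the theorem you are proving; what is needed is the argument behind it.) The one substantive step you leave open --- your ``main obstacle'' of identifying the steady state selected by the long-time dynamics with the one whose product coordinate equals the pre-signal value --- is precisely the content of the paper's proof, and it closes in two lines: by non-conservativity one chooses the product $p$ to be a substance with $m(p)=0$ for every $m\in\mathcal M$; since any two energies $E_1,E_2$ solving \eqref{eq:energy} differ by an element of $\mathcal M$, all energies agree at $p$, hence \emph{every} steady state $e^{-E}$ of the underlying kinetic system has the same $p$-coordinate, namely the initial one. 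This also dissolves your worry about intersecting several exceptional sets: the convergence and return-to-baseline conditions hold for \emph{all} rate functions satisfying the hypotheses, with no exceptional set at all, so the only genericity statement required is the one already contained in Theorem \ref{thm:reaction} --- which, incidentally, the paper formalizes not as the complement of a lower-dimensional set but as the existence of arbitrarily small detailed-balance-preserving perturbations of $\mathcal K$ restoring the response, and this is the form in which the dichotomy appears in Proposition \ref{prop:DB and adaptation}. With the missing lemma on the energies inserted, your argument coincides with the paper's.
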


\bigskip 

\textbf{Plan of the paper} 
\bigskip 

The plan for this paper is the following. 
In Section \ref{sec:signalling} we introduce the definition of signalling system. To this end we briefly recall the definition of kinetic system (see Section \ref{sec:kinetic systems}) and the definition of detailed balance for kinetic systems (see Section \ref{sec:db}). As a last step we introduce the definition of a signalling system as a kinetic system in which the signal changes in time according to a given function of time $f$. This is done in Section \ref{sec:signalling def}. Moreover, in this section we also prove that if the kinetic system underlying the signalling system satisfies the detailed balance property and if the function $f$, that prescribes the way in which the signal changes in time, converges sufficiently fast to a constant value, then the the vector of the concentrations of the substances in the system converges to a steady state as time tends to infinity. 

In Section \ref{sec:adaptation} we prove one of the most important results of this paper regarding the possibility of having adaptation in signalling systems that exchange chemicals with the environment. 
In particular, we rigorously state and prove Theorem \ref{thm:intro response}. 
As we will explain later, in order to prove Theorem \ref{thm:intro response} we assume that the kinetic system, underlying the signalling system under study, satisfies detailed balance property. The reason why we make this assumption is that kinetic systems with the detailed balance property have a structure that allows to prove that changes in the signal produce changes in the product. It would be an interesting problem to study this problem in full generality, i.e. removing the detailed balance assumption.  
In Section \ref{sec:db and adaptation} we find a mechanism of having adaptation in a signalling system that satisfies the detailed balance property. 
In other words, we prove Theorem \ref{thm:intro db and adaptation}. 
Finally in Section \ref{sec:adaptation in closed systems} we prove the main theorem of the paper, i.e. we prove that, unless the factorization assumption on the conserved quantities holds, robust adaptation is impossible in closed systems. This is the statement of Theorem \ref{thm:intro ad in closed}. 

In Section \ref{sec:examples} we compare the results obtained in this paper with some models of adaptation that can be found in the literature. These include the Barkai-Leibler model of bacterial chemotaxis, the classical model of adaptation proposed in  \cite{segel1986mechanism} and some of the models considered in \cite{ferrell2016perfect}. 
In particular we show that our results are consistent with the results in the literature. 
Moreover, we also show that some models of adaptation that are not closed can be obtained as the reduction of systems that are closed. More precisely, they are effective models describing kinetic systems that are closed and in which the concentration of  certain substances is frozen at constant values.

\bigskip 

\subsection{Notation}
In this paper we use the following notation. We define $\mathbb R_+$ and $\mathbb R_*$ to be given respectively by $ \mathbb R_* = [0, \infty )$ and $ \mathbb R_+ = (0, \infty )$. 
Moreover we denote with $e_i \subset \mathbb R^n $, for $ i \in 1, \dots n $, the vectors of the canonical basis of $\mathbb R^n$. 
Given two vectors $v_1 , v_2 \in \mathbb R^n $ we denote with $\langle v_1, v_2 \rangle $ their euclidean scalar product in $\mathbb R^n$ and with $v_1 \otimes v_2 $ we denote their tensor product.
Finally, given a vector $v \in \mathbb R^n $, we will denote with $e^{v}$ the vector $ (e^{v(i)})_{i=1}^n \in \mathbb R^n$. Similarly it will be useful to denote with $\log(v) $ the vector $ (\log(v(i)))_{i=1}^n \in \mathbb R^n$. 
Given a $z \in \mathbb R^n$, we denote with $B_r (\zeta)$ the open ball of radius $r$  around $z$, i.e. 
\[
B_r(z)=\{ y \in \mathbb R^n : \| z-y \| < r   \} 
\]
where $\| \cdot  \| $ is the Euclidean norm. 

Let $\mathcal G =(V, E) $ be a graph. 
A walk w in $\mathcal G $ is a finite non-null sequence $v_0 e_1 v_1 e_2 v_2 \dots e_k v_k $ whose terms are alternatively vertices and edges such that, $e_i=(v_{i-1}, v_i)$ for every $ i \in \{ 1 \dots k \}$.  Moreover, $\ell(w):= k $. A path is a walk in which all the edges and all the vertices are distinct. In Section \ref{sec:response} we will use often the notation $i \in w $ where $w $ is a walk and $i \in V $ to indicate that the walk $w$ contains the vertex $i $. 
Moreover, unless not otherwise specified, we say that $f \sim g $ as $ t \to \infty$ (or as $t \to 0$) if $\lim_{t\to \infty } \frac{f(t) }{g(t) } =1$ (or $ \lim_{t \to 0} \frac{f(t) }{g(t)}=1$). 
 
\section{Signalling systems} \label{sec:signalling}
The goal of this section is to give the definition of signalling system. 
As mentioned in the introduction a signalling system is a kinetic system where one substance, the signal, changes in time according to a given external boundary condition $f$. Therefore we start this section defining a kinetic system.

\subsection{Kinetic systems and their conservation laws} \label{sec:kinetic systems}
In this section we recall the definition of kinetic system.
 A kinetic system is a set of of substances, a set of reactions and a set of reaction rate functions.
 \begin{definition}[Kinetic system]
  Let $\Omega:=\{ 1, \dots, N \}$. 
  Let $r \geq 1 $ and $\mathcal R := \{ R_1, \dots, R_r  \}  $ where $R_j \in \mathbb Z^N\setminus \{ 0\} $ for every $j \in \{ 1, \dots, r \} $. 
Let $\mathcal K : \mathcal R \rightarrow \mathbb R_+$ be a function. 
Then $(\Omega , \mathcal R, \mathcal K)$ is a  kinetic system.
\end{definition}
The set $\Omega $ is the set of the substances in the system, $\mathcal R$ is the set of chemical reactions taking place in the network and $\mathcal K (R) $ is the rate of the reaction $R \in \mathcal R$.  The function $\mathcal K $ is usually called reaction rate function. 

Let $R \in \mathcal R $ be a reaction. We will use the following notation
\[
I(R):=\{ i \in \Omega : R(i) <0 \}, \quad   F(R):= \{  i \in \Omega : R(i) >0 \} \ \text{ and } \ D(R):=I(R) \cup F(R).
\]
 The set $I(R)$ is the set of the initial substances of the reaction $R \in \mathcal R$ and the set $F(R) $ is the set of the final substances of the reaction $R \in \mathcal R$. The set $D(R) $ is the domain of the reaction, i.e. the set of the substances that take part to the reaction. 
In this paper we assume that $I(R) \cap F(R) = \emptyset $ for every $R \in \mathcal R$. This means that reactions of the form $(1) +(2) \rightarrow (2) +(3) $ are not considered in this paper. 
Moreover, we assume that every substance in the system takes part to at least one reaction, this means for every $i \in \Omega $ there exists a $R \in \mathcal R$ such that $ i \in  D(R)$ and that $D(R)\neq \emptyset$ for every $R \in \mathcal R$.

We give the definition of conservation law. 
\begin{definition}[Set of conservation laws]
    The set $\mathcal M $ of conservation laws of a chemical network $(\Omega , \mathcal R) $ is defined as
    \begin{equation} \label{eq:stochio}
\mathcal M  := \operatorname{span}\{ R: R \in \mathcal R  \}.  
    \end{equation}
\end{definition}

Let $m \in \mathcal M $, then
    \[
     m^T R = \textbf{0},\quad \forall R \in \mathcal R. 
    \]
    This is the reason why we refer to $\mathcal M $ as the set of the conservation laws. If $n_0 \in \mathbb R^N $ is the initial vector of concentrations $n_0$ and $n(t)$ is the vector of concentrations at time $t>0 $, then 
\[
 m^T n_0 = m^T n(t) \quad \text{ for every } t >0 \text{ and for any } m \in \mathcal M. 
\]
We define now the set of physically relevant non-negative conservation laws
$\mathcal M_+:=\mathcal M \cap \mathbb R_*^N .$
We say that the kinetic system $(\Omega, \mathcal R, \mathcal K )$ is conservative if
\begin{equation}\label{eq:conservative}
\mathcal M_+ \cap \mathbb R_+^N \neq \emptyset.
\end{equation}
It is possible to prove that it is always possible to find a positive basis of $\mathcal M $ when the kinetic system $(\Omega , \mathcal R, \mathcal K ) $ is conservative. 
\begin{lemma} \label{lem:positivity of the conservation laws}
Assume that the chemical network $(\Omega, \mathcal R) $ is conservative. 
Then the set of the extreme rays $\mathcal B$ of the positive cone $\mathcal M_+$ are a basis of $\mathcal M $. 
\end{lemma}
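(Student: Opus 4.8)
The plan is to treat $\mathcal M_+ = \mathcal M \cap \mathbb R_*^N$ as a convex polyhedral cone and combine two ingredients: the Minkowski--Weyl structure theorem for pointed polyhedral cones, and the existence of a strictly positive conservation law guaranteed by conservativity.

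First I would record the elementary structural facts about $\mathcal M_+$. It is a convex cone, being the intersection of the linear subspace $\mathcal M$ with the closed convex cone $\mathbb R_*^N$. It is polyhedral, since it is cut out of $\mathbb R^N$ by finitely many linear equalities (those defining $\mathcal M$) together with the $N$ inequalities $x_i \geq 0$. It is also pointed: if $v \in \mathcal M_+$ and $-v \in \mathcal M_+$, then $v \geq 0$ and $-v \geq 0$ componentwise, forcing $v = 0$, so $\mathcal M_+$ contains no line. A pointed polyhedral cone is, by the Minkowski--Weyl theorem, the conical hull of its finite set $\mathcal B$ of extreme rays; in particular every element of $\mathcal M_+$ is a non-negative linear combination of elements of $\mathcal B$.

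Next I would use conservativity to show that $\mathcal B$ spans $\mathcal M$ as a vector space. By \eqref{eq:conservative} there exists $w \in \mathcal M_+ \cap \mathbb R_+^N$, i.e. $w \in \mathcal M$ with $w_i > 0$ for every $i$. Given an arbitrary $m \in \mathcal M$, choose $t>0$ large enough that $t w_i + m_i \geq 0$ for all $i$; this is possible because each $w_i>0$ and there are finitely many coordinates. Then $tw+m \in \mathcal M \cap \mathbb R_*^N = \mathcal M_+$, so $m = (tw+m)-tw$ is a difference of two elements of $\mathcal M_+$. Hence $\mathcal M = \operatorname{span}(\mathcal M_+) = \operatorname{span}(\mathcal B)$, using that $\mathcal B$ generates $\mathcal M_+$. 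Thus $\mathcal B$ is a spanning family of $\mathcal M$ consisting of non-negative (indeed extreme-ray) vectors, which already yields a positive basis once a maximal linearly independent subfamily is extracted.

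The delicate point, and the step I expect to be the main obstacle, is passing from ``spanning family'' to genuine ``basis,'' i.e. verifying that the extreme rays are linearly independent with cardinality equal to $\dim \mathcal M$. For a general pointed polyhedral cone the extreme rays need not be linearly independent, so the specific structure of conservation cones must enter here: the extreme rays of $\mathcal M_+$ are exactly the nonzero elements of $\mathcal M_+$ of minimal support, and I would analyze how these minimal supports interact with the defining equalities of $\mathcal M$. The clean situation is when the minimal-support conservation laws have pairwise disjoint supports, in which case linear independence is immediate and a coordinate count against $\dim \mathcal M$ closes the argument; I would therefore aim either to reduce to this disjoint-support situation using pointedness, or, failing that, to fall back on extracting a maximal linearly independent subfamily of $\mathcal B$, which is then a basis of $\mathcal M$ made of extreme rays and delivers the desired positive basis.
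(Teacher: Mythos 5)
The paper gives no proof of this lemma in the text --- it only cites \cite{franco2025reduction} --- so there is nothing to compare against line by line. Your argument for the spanning part is correct and is surely the intended one: $\mathcal M_+=\mathcal M\cap\mathbb R_*^N$ is a pointed polyhedral cone, hence by Minkowski--Weyl it is the conical hull of its finitely many extreme rays, and conservativity supplies a strictly positive $w\in\mathcal M_+$ so that every $m\in\mathcal M$ is the difference $(tw+m)-tw$ of two elements of $\mathcal M_+$ for $t$ large; therefore $\operatorname{span}\mathcal B=\mathcal M$. (Here, as everywhere below, $\mathcal M$ must be read as the space of vectors $m$ with $m^TR=0$ for all $R\in\mathcal R$, which is what the identity following \eqref{eq:stochio} requires, rather than the literal $\operatorname{span}\{R\}$ written there.)

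The ``delicate point'' you flag is not merely delicate: the statement as literally written is false, and no argument can close that gap. Take the single bidirectional reaction $(1)+(2)\leftrightarrows(3)+(4)$. Then $\mathcal M=\{m\in\mathbb R^4:\ m_1+m_2=m_3+m_4\}$ has dimension $3$ and the network is conservative, since $(1,1,1,1)^T\in\mathcal M\cap\mathbb R_+^4$; but $\mathcal M_+$ has the four extreme rays $(1,0,1,0)^T$, $(1,0,0,1)^T$, $(0,1,1,0)^T$, $(0,1,0,1)^T$, which satisfy $(1,0,1,0)^T+(0,1,0,1)^T=(1,0,0,1)^T+(0,1,1,0)^T$ and so are linearly dependent. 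In particular your hoped-for reduction to pairwise disjoint minimal supports cannot work in general. The only tenable reading of the lemma is the weaker one announced in the sentence preceding it --- that $\mathcal M$ admits a basis consisting of non-negative conservation laws, extracted from the extreme rays --- and that is exactly what your fallback (take a maximal linearly independent subfamily of $\mathcal B$) delivers. So your proposal establishes everything that is actually true here and correctly isolates the point at which the stated claim breaks down; the defect lies in the statement, not in your argument.
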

For the proof of Lemma \ref{lem:positivity of the conservation laws} we refer to \cite{franco2025reduction}.

Let us define the subset of reactions $\mathcal R_s \subset \mathcal R $, obtained identifying each reaction $R$ with the reversed reaction $- R$. 
More precisely, the set $\mathcal R_s \subset \mathcal R$ is defined as 
\[
\mathcal R_s := \{ R \in \mathcal R : - R \notin \mathcal R  \} \cup  \{ R \in \mathcal R \setminus \{ R \in \mathcal R : - R \notin \mathcal R  \} : \min{I(R) } < \min {F(R)} \}.
\] 
Hence we have that, if $R \in \mathcal R $ is such that $-R \notin \mathcal R $, then $R \in \mathcal R_s $. Instead if $ R ,  - R \in \mathcal R$ only one of the two reactions belong to $\mathcal R_s$. 
We say that a kinetic system is \textit{bidirectional} if for every $R \in \mathcal R $ we have that $- R \in \mathcal R$.
Notice that if the kinetic system  $(\Omega, \mathcal R, \mathcal K )$ is bidirectional, then $|\mathcal R_s| =r/2$.

It is convenient to associate to a kinetic system $(\Omega , \mathcal R, \mathcal K ) $ a system of ODEs as follows
\begin{equation} \label{ODEs}
\frac{d n (t) }{dt} = \sum_{ R \in \mathcal R } K_R R  \prod_{i\in I(R)} {(n_i)}^{-R(i)} , \quad n(0)=n_0 \in \mathbb R_*^N.
\end{equation}
Here the solution $ n:=(n_1, \dots, n_N)^T \in \mathbb R_*^N $ describes the change in time  of the concentrations of substances in the network.  
For the purposes of this paper is convenient to rewrite the system of ODEs corresponding to a bidirectional kinetic system as follows
\begin{equation} \label{ODEs fluxes}
\frac{dn(t)}{dt} =  \sum_{ R \in \mathcal R_s } R J_R(n), \quad n(0)= n_0 \in \mathbb R_*^N,
\end{equation}
where \begin{equation} \label{eq:fluxes}
    J_R (n):= K_R \prod_{ j \in I(R)  } {(n_j)}^{-R(j)} - K_{- R }  \prod_{ j \in F(R)  } {n_j}^{R(j)}. 
\end{equation}
We refer to Lemma 2.10 in \cite{franco2025reduction} for the details of this computation.

\subsection{Detailed balance property of kinetic systems}
\label{sec:db}
A bidirectional kinetic system satisfies the detailed balance property if 
at the steady state each reaction is balanced by its reverse reaction. 
We state now the precise definition. 
\begin{definition}[Detailed balance property]
  A bidirectional kinetic system $(\Omega, \mathcal R, \mathcal K ) $ satisfies the detailed balance property if there exists a $\overline N \in \mathbb R_+^N $ of \eqref{ODEs} such that  
  \begin{equation}\label{eq:DB}
K_R \prod_{i\in I(R)}  {(\overline N_i)}^{-R(i) } = K_{-R} \prod_{i \in F(R) } {\overline N_i}^{R(i) } \  \text{ for all } R\in \mathcal R_s. 
  \end{equation}
\end{definition}
By the definition of detailed balance we have that $\overline N$ is such that $J_R(\overline N) =0$ for every $R \in \mathcal R$. In particular we have that $\overline N$ is a steady state of the system of ODEs \eqref{ODEs}. 
A consequence of the detailed balance property is that the steady states of \eqref{ODEs} can be written as a function of the vector $E \in \mathbb R^N$, where $E$ is the vector of the energies associated with each substance in the network.
\begin{lemma} \label{lem:db special ss}
Assume that the kinetic system $(\Omega, \mathcal R, \mathcal K)$ satisfies the detailed balance property. 
    Then there exists a vector $E \in \mathbb R^N$ such that the equality
    \begin{equation} \label{eq:energy}
    \mathcal E(R):= \log\left( \frac{\mathcal K(-R) }{\mathcal K(R) } \right) = \sum_{i \in \Omega} R(i) E(i) 
    \end{equation}
    holds for every $R \in \mathcal R_s$.
Moreover $N^* =(N^*_i)_{i=1}^N$ is a steady state of the system of ODEs \eqref{ODEs} if and only if
      \begin{equation}      \label{stst when DB}
      N^*_i=  e^{- E(i)}, \quad i \in \{ 1, \dots, N\} 
      \end{equation}
      where $E \in \mathbb R^N $  is a solution of \eqref{eq:energy}.
      \end{lemma}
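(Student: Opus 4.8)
The plan is to split the argument into three parts: the construction of the energy vector $E$, the sufficiency of the form \eqref{stst when DB}, and its necessity, with the last being the main obstacle.

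To produce $E$ I would simply take logarithms of the detailed balance identity \eqref{eq:DB} evaluated at the positive equilibrium $\overline N$ guaranteed by the definition. Since $R(i)<0$ for $i\in I(R)$, $R(i)>0$ for $i\in F(R)$, and $R(i)=0$ otherwise, the two products in \eqref{eq:DB} collapse, after taking logarithms, into the single linear relation $\log(\mathcal K(R)/\mathcal K(-R)) = \sum_{i\in\Omega} R(i)\log\overline N_i$. Setting $E(i):=-\log\overline N_i$ then yields exactly \eqref{eq:energy}, and simultaneously shows $\overline N_i=e^{-E(i)}$, so $\overline N$ is itself of the claimed form.

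For the sufficiency direction of the equivalence I would insert $N^*_i=e^{-E(i)}$ directly into the flux $J_R$ defined in \eqref{eq:fluxes} and check that $J_R(N^*)=0$ for every $R\in\mathcal R_s$. The forward term becomes $\mathcal K(R)\exp\big(\sum_{i\in I(R)}R(i)E(i)\big)$, and using \eqref{eq:energy} to rewrite $\mathcal K(-R)=\mathcal K(R)\exp\big(\sum_{i\in\Omega}R(i)E(i)\big)$ turns the backward term into the forward one, so each flux vanishes; by \eqref{ODEs fluxes} this forces $\tfrac{d}{dt}N^*=0$, i.e. $N^*$ is a steady state.

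The necessity direction is the heart of the matter, since a priori a steady state of \eqref{ODEs fluxes} need only satisfy $\sum_{R\in\mathcal R_s}R\,J_R(N^*)=0$, not $J_R(N^*)=0$ for each $R$ separately. To upgrade the former to the latter I would introduce the free energy functional $V(n)=\sum_i\big(n_i\log(n_i/\overline N_i)-n_i+\overline N_i\big)$ relative to $\overline N$. Writing $a_R$ and $b_R$ for the forward and backward rates so that $J_R=a_R-b_R$, the detailed balance identity for $\overline N$ gives the key algebraic relation $\langle\log(n/\overline N),R\rangle=\log(b_R/a_R)$, whence $\tfrac{dV}{dt}=\sum_{R\in\mathcal R_s}\langle\log(n/\overline N),R\rangle J_R=-\sum_{R\in\mathcal R_s}(a_R-b_R)(\log a_R-\log b_R)\le 0$, every summand being nonnegative by monotonicity of the logarithm. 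At a steady state the left-hand side vanishes, forcing $a_R=b_R$, i.e. $J_R(N^*)=0$, for every $R$; taking logarithms of these equalities exactly as in the first part then produces $N^*_i=e^{-E(i)}$ with $E$ solving \eqref{eq:energy}. The points I expect to treat carefully are the strictness of the monotonicity (equality only at $J_R=0$) and the restriction to positive steady states in $\mathbb R_+^N$, where the logarithms are well defined; possible boundary steady states would require a separate remark.
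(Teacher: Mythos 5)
The paper does not prove this lemma in-text but defers to Lemma 3.6 of \cite{franco2025reduction}; your argument --- taking logarithms of the detailed-balance identity at $\overline N$ to build $E$, direct substitution into \eqref{eq:fluxes} for sufficiency, and the free-energy dissipation $\frac{dV}{dt}=-\sum_{R\in\mathcal R_s}(a_R-b_R)(\log a_R-\log b_R)\le 0$ to upgrade $\sum_R R\,J_R(N^*)=0$ to $J_R(N^*)=0$ for each $R$ in the necessity direction --- is the standard proof of this fact and is correct. Your closing caveat is the right one to flag: the ``only if'' direction genuinely requires $N^*\in\mathbb R_+^N$ so that the logarithms are defined, which is implicit in the statement since $e^{-E(i)}>0$.
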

The proof of this lemma can be found in \cite[Lemma 3.6]{franco2025reduction}. 
Notice that a consequence of this lemma is that if the kinetic system $(\Omega, \mathcal R, \mathcal K ) $ satisfies the detailed balance property, then equality \eqref{eq:DB} is attained at every steady state of the system of ODEs \eqref{ODEs}.

In the following, we refer to a solution to \eqref{eq:energy} as an \textit{energy} of the kinetic system.
Notice that the solution to \eqref{eq:energy} is not unique unless $\mathcal M = \{0\} $. 
Indeed if $E $ is a solution to \eqref{eq:energy} and $m \in \mathcal M $, then also $m+ E $ is a solution to \eqref{eq:energy}. Indeed for every $R \in \mathcal R $ it holds that
\[
\sum_{i \in \Omega} R(i) E(i) + \sum_{i \in \Omega} R(i)m (i) = \sum_{i \in \Omega} R(i) E(i) = \mathcal E(R). 
\]

We conclude with the definition of closed kinetic system, which is a system that does not exchange substances with the environment.
\begin{definition}
    A kinetic system $(\Omega, \mathcal R, \mathcal K )$ is closed if it satisfies the detailed balance property, is conservative and is such that 
    \begin{equation} \label{no sources/sinks}
   I(R) \neq \emptyset \ \text{ and } \  F(R) \neq \emptyset \text{ for every } R \in \mathcal R. 
    \end{equation}
\end{definition}
\subsection{Signalling systems} \label{sec:signalling def}
 In this section we give the definition of signalling systems. These are kinetic systems in which one of the concentration, the signal, changes in time according to a given function $f$. See Figure \ref{fig2} for a visual representation of a signalling system.
 More precisely, 
a \textit{signalling system}  $(\Omega , \mathcal R, \mathcal K, n_1(t))$ is a kinetic system in which the concentration of the substance $(1)$ is a given function of time. 
 Therefore, the change in time of the concentrations of the substances of a signalling system is described by the following system of ODEs 
\begin{equation} \label{ODEs signalling}
 \frac{d n (t) }{dt } = \sum_{ R \in \mathcal R_s } R J_R(n) + J^F, n_0 \in \mathbb R_*^N
\end{equation}
 where
$ J^F(t) := e_1 J^F_1 (t)$ and where $J^F_1 (t)$ is given and is such that $n_1(t)=f(t) $. 
Moreover, we make the following assumptions on the function $f(t)$. This assumption guarantees that the signal concentration converges to constant values sufficiently fast. 
\begin{figure}[H] 
\centering
\includegraphics[width=0.5\linewidth]{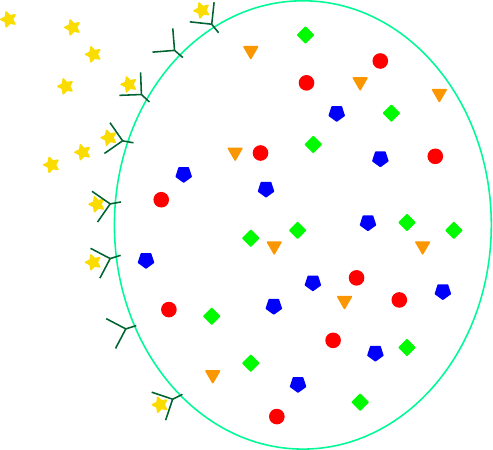}
\caption{
An example of signalling system that  is conservative, i.e. it exchanges only the signal (yellow star) with the environment. Notice that the signals binds to the receptors of the signalling system. 
}\label{fig2}
\end{figure}

\begin{assumption}[Assumption on the signal] \label{ass signal}
We assume that $f: \mathbb R_* \rightarrow \mathbb R_* $ is a continuously differentiable function such that there exists a $r >0 $ such that $|\frac{d}{dt}\log (f)|< e^{ - rt } $ and
\begin{equation} \label{singal approches a constant}
\lim_{t \rightarrow \infty } f(t) = \overline n_1>0. 
\end{equation}   
\end{assumption}
\begin{proposition} \label{prop:property 1 adaptation}
    Assume that $(\Omega, \mathcal R, \mathcal K) $ satisfies the detailed balance property and is such that $\mathcal M \neq \{ 0\} $.  Let $n_0 \in \mathbb R_*^N $. 
Assume that the function $f(t)$ satisfies Assumption \ref{ass signal} and is such that $f(0)=n_0(1)$. 
Then the solution of the system of ODEs \eqref{ODEs signalling} is such that
\begin{equation} \label{bound fluxes}
\overline J^F_1:=\int_0^\infty J_1^F (t) dt < \infty.
\end{equation}
Moreover, for every $n_0\in \mathbb R^N $ we have that 
$\lim_{t \to \infty } n(t)= e^{-E}$
where $E$ is the unique energy of $(\Omega, \mathcal R, \mathcal K) $ that is such that
\begin{equation} \label{cons laws when fluxes2}
m^T e^{- E }- m^T n_0 = m^T \overline J^F_1 \text{ for every } m \in \mathcal M. 
\end{equation}

\end{proposition}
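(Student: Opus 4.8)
The plan is to read \eqref{ODEs signalling} as the detailed-balance dynamics of the underlying kinetic system perturbed by a boundary forcing that is concentrated on substance $(1)$ and decays in time, and then to run a free-energy (relative entropy) argument. I would first fix a target equilibrium: since the system satisfies detailed balance and $\overline n_1>0$, I choose an energy $E$ of $(\Omega,\mathcal R,\mathcal K)$ normalised so that $e^{-E(1)}=\overline n_1$ (the first component of $E$ can be shifted along $\mathcal M$, which is where $\mathcal M\neq\{0\}$ enters), and set $N^*:=e^{-E}$. By Lemma \ref{lem:db special ss} this $N^*$ is a steady state of the unforced system with $N^*_1=\overline n_1$. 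I then introduce the free energy $\mathcal F(n):=\sum_{i}\bigl(n_i\log(n_i/N^*_i)-n_i+N^*_i\bigr)$, which is nonnegative, strictly convex and coercive.

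The central computation is the time derivative of $\mathcal F$ along \eqref{ODEs signalling}. Writing $a_R:=K_R\prod_{j\in I(R)}n_j^{-R(j)}$ and $b_R:=K_{-R}\prod_{j\in F(R)}n_j^{R(j)}$, so that $J_R=a_R-b_R$, the detailed-balance identities \eqref{eq:DB} and \eqref{eq:energy} give $\sum_{i}R(i)\log(n_i/N^*_i)=\log(b_R/a_R)$. Hence the reaction part of $\tfrac{d}{dt}\mathcal F$ collapses to the usual dissipation $-D(n):=-\sum_{R\in\mathcal R_s}(a_R-b_R)\log(a_R/b_R)\le 0$, and the only surviving boundary term is carried by $J^F=e_1J_1^F$, so that
\[
\frac{d}{dt}\mathcal F(n(t))=-D(n(t))+\log\!\Bigl(\frac{f(t)}{\overline n_1}\Bigr)\,J_1^F(t).
\]
The role of Assumption \ref{ass signal} is exactly to make this forcing integrable: from $|\tfrac{d}{dt}\log f|<e^{-rt}$ one gets $|\log(f(t)/\overline n_1)|\le \tfrac1r e^{-rt}$, and writing $J_1^F=\tfrac{df}{dt}-\sum_{R}R(1)J_R$ I would split the forcing into a manifestly integrable piece $\log(f/\overline n_1)\,\tfrac{df}{dt}$ and a piece controlled by the dissipation via the elementary bound $|a-b|\le\sqrt{(a+b)\,(a-b)\log(a/b)}$ and Young's inequality, absorbing half of $D$. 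This yields $\tfrac{d}{dt}\mathcal F\le -\tfrac12 D(n)+g(t)$ with $g$ exponentially small.

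From here the conclusions follow in the expected order. Coercivity of $\mathcal F$ together with the differential inequality gives global existence and uniform-in-time bounds on $n(t)$ (and positivity, since $\mathcal F\to\infty$ whenever some $n_i\to0$); integrating gives $\int_0^\infty D(n)\,dt<\infty$ and the existence of $\lim_{t\to\infty}\mathcal F(n(t))$. An asymptotically-autonomous / LaSalle argument then upgrades $\int_0^\infty D<\infty$ to convergence of $n(t)$ to the equilibrium set $\{D=0\}$, i.e. to a detailed-balance steady state, which by Lemma \ref{lem:db special ss} has the form $e^{-\widehat E}$; since $n_1(t)=f(t)\to\overline n_1$ its first component is $\overline n_1$. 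To identify $\widehat E$ I use the conservation laws: for every $m\in\mathcal M$ one has $\tfrac{d}{dt}(m^Tn)=m(1)J_1^F(t)$, hence $m^Tn(t)-m^Tn_0=m(1)\int_0^tJ_1^F$. Passing to the limit (the left side converges because $n(t)$ does) shows simultaneously that $\overline J_1^F=\int_0^\infty J_1^F<\infty$, proving \eqref{bound fluxes}, and that the limit satisfies \eqref{cons laws when fluxes2}; strict convexity of $E\mapsto (m^Te^{-E})_{m\in\mathcal M}$ on the coset $E_0+\mathcal M$ gives uniqueness of such an energy, hence $n(t)\to e^{-E}$.

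The main obstacle is the a priori control in the third paragraph: the forcing multiplies reaction rates $J_R$ that grow superlinearly in $n$ (for higher-order reactions the prefactor $C(n)$ multiplying $e^{-2rt}$ is not dominated by $\mathcal F$), so the differential inequality does not close by a one-line Grönwall estimate, and one must genuinely exploit the exponential decay of the signal together with finite-time non-explosion to propagate the bound. Relatedly, turning $\int_0^\infty D<\infty$ into honest (rather than merely subsequential) convergence requires care since \eqref{ODEs signalling} is non-autonomous; the clean route is to compare with the frozen autonomous system $n_1\equiv\overline n_1$, whose $\omega$-limit sets are single equilibria by the strict Lyapunov property, using that the difference between the two vector fields is exponentially small in $t$.
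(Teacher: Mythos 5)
Your overall strategy (relative entropy as a Lyapunov functional, dissipation plus a forcing term controlled via Assumption \ref{ass signal}, then conservation laws to get \eqref{bound fluxes} and to identify the limiting energy) is the same family of argument as the paper's, and the endgame — $\int_0^\infty \mathcal D<\infty$, convergence, identification of $E$ via \eqref{cons laws when fluxes2} — matches. The difference is your choice of reference state, and that choice is exactly where your argument has a genuine gap, which you yourself flag as "the main obstacle" without resolving it. You take the entropy relative to the \emph{fixed} equilibrium $N^*=e^{-E}$ with $N^*_1=\overline n_1$, which forces the boundary term $\log(f/\overline n_1)\,J_1^F$ into $\tfrac{d}{dt}\mathcal F$; since $J_1^F=f'-\sum_R R(1)J_R$ contains the reaction fluxes, after your Cauchy--Schwarz/Young absorption you are left with a remainder of the form $e^{-2rt}\sum_R(a_R+b_R)$, which for reactions of order two or higher grows like $n^2$ and is \emph{not} dominated by $C+\mathcal F(n)$ (which only controls $n\log n$). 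So the differential inequality $\tfrac{d}{dt}\mathcal F\le-\tfrac12 D+g$ with integrable $g$ is not actually established, and the a priori bound on $n$ — on which everything downstream rests — is missing.

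The paper closes this by a different choice of reference: it uses a \emph{time-dependent} quasi-equilibrium $n^s(t)=e^{-E^s(t)}$ with $E^s(t)=E_0+\tfrac{m}{m(1)}\bigl[\log(n_0(1))-\log(f(t))\bigr]$ for some $m\in\mathcal M$ with $m(1)>0$ (this is where $\mathcal M\neq\{0\}$ is used), so that $n^s_1(t)=f(t)$ at every time, and it sums the entropy $F_r$ only over $\Omega\setminus\{1\}$. With this moving reference the signal coordinate contributes no boundary term at all; the only forcing comes from $\tfrac{d}{dt}n^s_j=\tfrac{m(j)}{m(1)}n^s_j\,f'/f$, giving a term $-\sum_{j\neq1}\tfrac{m(j)}{m(1)}n_j\,f'/f$ that is \emph{linear} in $n$ and hence bounded by $c_1e^{-rt}(c_2+F_r(n))$, so Gr\"onwall closes in one line and yields $\sup_{t>0}n(t)<\infty$. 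The rest (integrability of $J_1^F$ from $m^Tn(t)-m^Tn_0=m(1)\int_0^tJ_1^F$, vanishing of the dissipation, identification of the limit) then proceeds as you describe. If you want to keep your fixed-reference formulation you would need an independent a priori bound on $n$ before absorbing the forcing; otherwise, adopting the moving reference is the cleanest repair.
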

\begin{proof}
Notice that there exists an energy $E_0 $ of the kinetic system $(\Omega, \mathcal R, \mathcal K ) $ that is such that $E_0(1)= - \log(n_0(1))= - \log(f(0))$. 
Indeed since the kinetic system satisfies the detailed balance property we know that there exists an $\overline E $ such that \eqref{eq:energy} holds for every $R \in \mathcal R_s$. 
Assume that $ \overline E(1) \neq - \log(n_0(1))= - \log(f(0))$. 
Without loss of generality we assume that there exists a $m \in \mathcal M $ that is such that $m(1)>0$.
Then we can define $E_0$ as 
\[
E_0=\overline E - m  \frac{ \overline E+  \log(n_0) }{m(1)}
\]
where $m \in \mathcal M $ is such that $m(1) >0$. 

Let us fix a conservation law $m \in \mathcal M $ such that $m(1)>0$. Then we can define the function $E^s : \mathbb R_* \rightarrow \mathbb R^N  $ as 
\[
E^s(t):=E_0 + \frac{m }{m(1) } \left[\log(n_0(1)) - \log(f(t)) \right]. 
\]
We can then define the function $n^s$ as $n^s(t):= e^{- E^s(t)}.$
Notice that by the definition of $E^s$ we have that
\[
\frac{d }{dt}  n_j^s(t)= - n_j^s(t) \frac{d }{dt}  E_j^s(t)= \frac{m(j) }{m(1)} n_j^s(t) \frac{1}{f(t)}  \frac{d f(t)}{dt}, \quad  \forall j \in \{1, \dots, N\}. 
\]
Moreover, notice that since $f$ is assumed to be bounded, then we also have that $n^s$ is bounded. 
Let us consider the function $F_r: \mathbb R_*^N \rightarrow \mathbb R $ defined as 
\[
F_r(n) := \sum_{\Omega \setminus \{ 1\} } n_j \left( \log \left(\frac{n_j}{n_j^s} \right) -1  \right). 
\]
We have that 
\begin{align*}
 \partial_t  F_r(n) = \sum_{\Omega \setminus \{ 1\} } \partial_t n_j \log \left(\frac{n_j}{n_j^s} \right)  - \sum_{\Omega \setminus \{ 1\} } \frac{n_j}{n_j^s}\frac{d}{dt} n^s_j  = \mathcal D_r (n)   - \sum_{\Omega \setminus \{ 1\} } \frac{m(j)}{m(1)}n_j (t) \frac{1}{f(t)}  f'(t)
\end{align*}
where we have that 
\[
\mathcal D_r (n) := \sum_{ R \in \mathcal R } K_R \prod_{ i \in \Omega \setminus \{1\} } n_i^{- R(i) }  \log \left(\prod_{j \in \Omega \setminus \{ 1 \} } \left( \frac{n_j}{n^s_j} \right)^{R(j) }  \right) \left( 1- \prod_{k \in \Omega \setminus \{1 \} } \left(\frac{n_k}{n^s_k} \right)^{R(k) }  \right) \leq 0. 
\]
As a consequence, using the fact that $f$, $f'$ and $n_s $ are bounded we notice that for every $t>0$
\[ 
 \partial_t  F_r(n) \leq - \sum_{\Omega \setminus \{ 1\} }\frac{m(j)}{m(1)}n_j(t)\frac{1}{f(t)}  f'(t) < c_1 e^{-rt} \sum_{\Omega \setminus \{ 1\} }n_j(t) \leq c_1  e^{-rt} \left(c_2 + F_r(n) \right)
\]
for some constants $c_1, c_2 >0 $. 
By Grönwall's inequality we deduce that for every $t >0 $ we have that $F_r(t) < \infty$.
Moreover, we deduce also that $\sup_{ t >0 }  F_r(n) < \infty $, hence $ \sup_{ t >0 } n (t)  < \infty $. 
On the other notice that since there exists a $m \in \mathcal M $ such that $m(j) >0 $ for every $j \in \Omega $ we deduce that
\[ 
m^T n (t)  - m^T n_0 = m(1) \int_{0}^t J_1^F (s) ds  
\]
taking the supremum as $ t \to \infty $ we deduce that $\int_{0}^\infty  J_1^F (s) ds < \infty $. Moreover we deduce that $\lim_{ t \to \infty} n (t)=n(\infty) $ where $n(\infty) \in \mathbb R_*^N$.  
From this we deduce that since $ \int_0^\infty \mathcal D_r(n(s) ) ds < \infty  $ and $\lim_{t \to \infty } \mathcal D_r(n) < \infty $, then $ \lim_{ t \to \infty} \mathcal D_r(n)=0$. By continuity this implies that $\lim_{ t \to \infty} n(t) = \lim_{ t \to \infty} n^s(t)= e^{- E}$
where $E$ is the unique energy of $(\Omega , \mathcal R, \mathcal K ) $ such that $E_1 =-  \log( \overline n_1 )$ that satisfies \eqref{cons laws when fluxes2}. 
\end{proof}

\section{Adaptation} \label{sec:adaptation}
In this section we study the property of adaptation for signalling systems. 
The property of adaptation is an important property of several biological systems. Since these system often operate in out of equilibrium conditions it is relevant to understand if the adaptation property can be satisfied only by systems that are out of equilibrium. 
In particular, we are interested in understanding if signalling systems with the property of detailed balance can or cannot have the property of adaptation. 

\subsection{Definition of adaptation}\label{sec:adaptation def}

In this section we study the property of adaptation for signalling systems $(\Omega , \mathcal R, \mathcal K, n_1(t)) $.
A signalling system satisfies the property of adaptation if there exists a substance $p \in \Omega $ whose concentration changes when the signal concentration changes, but returns to the initial concentration levels (i.e. $n_0^T e_p$) after this transient time.
The precise definition of adaptation that we will discuss in this paper is the following one. 
\begin{definition}[Adaptation] \label{def:adaptation}
Consider the signalling system $(\Omega, \mathcal R, \mathcal K , n_1(t)) $ where $n_1(t)=f(t)$ and $f$ satisfies Assumption \ref{ass signal}. Assume that $n$ is the solution of equation  \eqref{ODEs signalling} with initial datum $n_0=e^{- E} \in \mathbb R_*^{N} $ where $E$ is an energy of $(\Omega, \mathcal R, \mathcal K) $. 
We say that the signalling system 
satisfies the adaptation property with respect to the signal $(1)$ and the product $ p \in \Omega \setminus \{ 1 \} $ if it satisfies the following three conditions.
\begin{enumerate}
\item Let $ \lim_ {t \to \infty} f(t) = \overline n_1 $. Then, there exists a unique steady state $N[ \overline n_1] \in \mathbb R_+^N$, depending on $\overline n_1 $, such that 
\[
\lim_{t \to \infty } n(t) = N[ \overline n_1];  
\]
    \item  the steady state $N[\overline n_1]$ satisfies $N[ \overline n_1](p)=n_0^T e_p$;
\item it holds that $ \sup_{ t >0} | n_p (t ) - n_0^T e_p | > 0 . $
\end{enumerate}
\end{definition}

The first property guarantees that the system of ODEs \eqref{ODEs signalling} converges to a steady state as time tends to infinity. The second property guarantees that the concentration of the substance $(p)$ at this steady state does not depend on the limiting value of the signal $ \overline n_1 $. In particular the concentration of the substance $(p)$ is given by the initial concentration of substance $(p)$, i.e. $n_0^T e_p$.
This guarantees that if we assume that the system is initially at steady state and we perturb it by changing the concentration of the signal, then, as time tends to infinity, the concentration of $p$ converges to the steady state value that it had before the change in the signal. 
The last property guarantees that the system reacts to changes of the signal.
We refer to Figure \ref{fig1} for a graphical representation of the property of adaptation. 

In the next section we study topological conditions on the chemical network $(\Omega, \mathcal R) $ that guarantee that property 3 in Definition \ref{def:adaptation} holds when the kinetic system satisfies the detailed balance condition. 
\subsection{Response of the network to changes in the signal concentration} \label{sec:response}
For the purposes of this section it is convenient to consider the following graph associated with a kinetic system $(\Omega, \mathcal R ) $.
The graph $\mathcal G_\Omega=(V, E) $ has vertices 
\[
V:= \Omega 
\]
and edges 
\[
E:=\{(\alpha , \beta  ) \in V : \exists R \in \mathcal R \text{ with } R(\alpha ) R(\beta ) \neq 0 \}. 
\] 
Two substances are \textit{connected} if there exists a walk in $\mathcal G_\Omega $ that connects them.

The following example shows that the fact that the substance $1\in \Omega $ is connected with $p \in \Omega \setminus \{ 1\} $ is not a sufficient condition for the property 3 in Definition \ref{def:adaptation} to hold. 
\begin{example}[Connected system with detailed balance and with no response] \label{ex1:nonresponse}
    Consider the following set of reactions 
    \[
    (1) \leftrightarrows (2),\quad  (1) \leftrightarrows (3), \quad  (2)\leftrightarrows (3) + (4). 
    \]
    We assume that all the reactions take place at rate $1$ except for the reactions  $ (2) \rightarrow (1) $ and $(3) \rightarrow (1) $ that take place at the same rate $\alpha >0$. 
    Notice that we can define a kinetic system $(\Omega, \mathcal R, \mathcal K ) $ corresponding to this set of chemical reactions. Moreover, we stress that the substance $(1)$ is connected with the substance $(4)$. 
    Assume that the concentration of $1 $ is given i.e. $n_1(t)=f(t)$ for a function $f$ that satisfies Assumption \ref{ass signal}. Then the evolution of $(n_2,n_3, n_4) $ is given by 
   \begin{align*}
       \frac{d n_2 }{dt } &=f(t)- (\alpha +1) n_2 + n_3 n_4  \\ 
         \frac{d n_3 }{dt } &=f(t) - \alpha  n_3 + n_2 - n_3 n_4 \\
           \frac{d n_4 }{dt } &= n_2 - n_3 n_4. 
   \end{align*}
   We consider the initial condition $n_0 \in \mathbb R_*^3$ given by $n_0=(\frac{f(0) }{\alpha},\frac{f(0) }{\alpha},1)^T$. 
   We stress that the above system of ODEs defines a signalling system. 
   Notice that a steady state of the above system must be of the form $ (N_1, N_1, 1)$. 
Moreover, 
   \begin{align*}
       \frac{d \left( n_2 - n_4 n_3 \right)  }{dt }=\frac{d n_2 }{dt }  - n_3 \frac{d  n_4 }{dt }  - n_4 \frac{d   n_3  }{dt } = F(n_2, n_3, n_4) 
   \end{align*}
   where \[
  F(n_2, n_3, n_4) : = (1- n_4) f(t) - (1+\alpha) (n_2 - n_4 n_3) - (n_4+n_3 ) (n_2 - n_4n_3  ).  
   \]
   Now notice that for every $N_1 >0 $ we have that $F(N_1, N_1,1)=0$.

 We now assume that $f(t) \rightarrow \overline n_1 \neq f(0)$ as $t \to \infty $. Hence we assume that the signal changes in time.  
 However $n_2 (t) - n_4(t)  n_3 (t) =0$ for every $t>0 $ because $F(n_2, n_3, n_4) =0$ for every $t >0$. Hence $n_4(t)=1 $ for every $t >0 $. As a consequence there is no change in the concentration of the substance $(4)$ after changes of the signal concentration. Therefore, the property 3 in the definition of adaptation does not hold even if the network is connected. 
\end{example}

In Example \ref{ex1:nonresponse} the product and the signal are connected , however changes in the signal do not produce changes in the product concentration.
However, the parameters in Example \ref{ex1:nonresponse} are fine tuned. 
Indeed, if we perturb the rates of the reactions taking place in the network, then we obtain that the property $3$ in Definition \ref{def:adaptation} holds. This motivates us to prove the following theorem that states that if the kinetic system $(\Omega, \mathcal R, \mathcal K) $ satisfies the detailed balance condition and is connected, then the property 3 in Definition \ref{def:adaptation} can fail only in an unstable manner, i.e. it fails only for fine tuned reaction rates. 
Before stating the theorem we present an additional assumption that the function $f$, describing the change in time of the signal, should satisfy in order for the theorem to be valid. In particular the signal concentration $n_s=f $ must change in the following way for small times
\begin{equation} \label{linear change signal}
f (t) - n_0(1) \sim  t \text{ as } t \rightarrow 0^+.
\end{equation}
A function $f$ that satisfies both \eqref{linear change signal} and Assumption \ref{ass signal} is called \textit{admissible signal}. 
\begin{theorem} \label{thm:reaction}
Assume that $(\Omega, \mathcal R, \mathcal K) $ satisfies the detailed balance property. 
Assume that $1 , p \in \Omega $ are connected. Then for every $\delta >0$ there exists a rate function $ \overline {\mathcal K} : \mathcal R \rightarrow \mathbb R_+$ such that
\begin{enumerate} 
\item $\| \mathcal K - \overline {\mathcal K} \| < \delta $; 
\item $(\Omega, \mathcal R, \overline {\mathcal K})$ satisfies the detailed balance property; 
\item the solution $\overline n $ to the system of ODEs \eqref{ODEs signalling} induced by $(\Omega , \mathcal R , \overline{\mathcal K} ) $ for an admissible signal satisfies property $3$ in Definition \ref{def:adaptation}. 
\end{enumerate}
\end{theorem}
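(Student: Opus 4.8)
The plan is to reduce property 3 to a short-time asymptotic analysis of the solution near $t=0$ and then to invoke analyticity of the relevant coefficient in the reaction rates. Since the initial datum is the detailed-balance steady state $n_0=e^{-E}$ (Lemma \ref{lem:db special ss}), every internal flux vanishes at $t=0$, so $n_i'(0)=0$ for $i\neq 1$, while the admissibility condition \eqref{linear change signal} forces the signal to switch on linearly, $n_1(t)-n_0(1)\sim t$. First I would linearize \eqref{ODEs signalling} about $n_0$, writing $x_i(t)=n_i(t)-n_0(i)$ and $\dot x_i=\sum_{j}A_{ij}x_j+(\text{h.o.t.})$ for $i\neq 1$, driven by $x_1(t)\sim t$. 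The point is that the off-diagonal entry $A_{ij}$ is nonzero only when $i$ and $j$ share a reaction, i.e.\ exactly along the edges of $\mathcal G_\Omega$; thus the interaction structure of the linearization is the graph $\mathcal G_\Omega$ itself.

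Next I would propagate the signal along a shortest path. Letting $d=\mathrm{dist}(1,p)$ in $\mathcal G_\Omega$ (finite since $1$ and $p$ are connected), iterated Duhamel integration of the linearized system shows that each hop along the graph raises the order in $t$ by one, so the leading short-time behaviour of the product is $n_p(t)-n_0^Te_p\sim c\,t^{d+1}$ with $c=\frac{1}{(d+1)!}\sum_{P}\prod_{(i,j)\in P}A_{ij}$, the sum running over the shortest paths $P$ from $1$ to $p$. The neglected nonlinear terms and the $o(t)$ error in $f$ only contribute at strictly higher order, so only the leading asymptotic of $f$ is used and the mere $C^1$ regularity of Assumption \ref{ass signal} suffices. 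If $c\neq 0$ then $n_p$ is non-constant for small $t>0$ and property 3 holds; Example \ref{ex1:nonresponse} is precisely the degenerate case $d=2$ with two shortest paths whose contributions $A_{42}A_{21}$ and $A_{43}A_{31}$ cancel.

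The heart of the argument is therefore to show that $c$, viewed as a function of the rate function on the detailed-balance manifold, is not identically zero. I would treat $c$ as a real-analytic function of $\mathcal K$: indeed $n_0=e^{-E}$ and the entries $A_{ij}$ are analytic in the rates, and the detailed-balance rate functions form a connected analytic manifold, parametrized via \eqref{eq:energy} by the energies together with one positive speed per reaction pair. Distinct shortest paths are distinct simple paths, hence carry distinct edge multisets, so $\sum_{P}\prod_{(i,j)\in P}A_{ij}$ is a genuinely nonzero polynomial in the edge-weight variables $\{A_{ij}\}$; a degeneration argument, scaling the reaction speeds so that one path's monomial dominates, then exhibits detailed-balance rates at which $c\neq 0$, establishing $c\not\equiv 0$.

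Finally, since $c$ is a nontrivial real-analytic function on the connected detailed-balance manifold, its zero set is nowhere dense. Hence for every $\delta>0$ the given $\mathcal K$ can be approximated to within $\delta$ by a detailed-balance rate function $\overline{\mathcal K}$ with $c(\overline{\mathcal K})\neq 0$; this $\overline{\mathcal K}$ satisfies conditions (1) and (2) by construction and, by the asymptotic above, condition (3). The main obstacle I anticipate is precisely the non-triviality of $c$ when there are several shortest paths: one must rule out a structural cancellation like the one in Example \ref{ex1:nonresponse}. This is handled by the distinct-monomial/degeneration argument, but it requires checking that the shortest-path edge weights $A_{ij}$ can be realized independently by admissible detailed-balance rates; should the leading coefficient be forced to vanish by such constraints, the same walk expansion is carried to the next order in $t$, and connectedness of $1$ and $p$ guarantees that some finite-order coefficient is a nontrivial analytic function of the rates.
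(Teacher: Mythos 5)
Your overall strategy coincides with the paper's: linearize about the detailed-balance steady state $n_0=e^{-E}$, organize $\Omega$ by graph distance from the signal so that the product first moves at order $t^{L+1}$ with leading coefficient a sum over shortest paths of products of linearization entries $A_{ij}$ (this is exactly the paper's $c_{L,p}$ in \eqref{eq:c}), and then perturb the rates within the detailed-balance class to make that coefficient nonzero. The packaging of the last step via real-analyticity and a nowhere-dense zero set is a legitimate alternative to the paper's explicit derivative computation, \emph{provided} the coefficient is shown not to vanish identically on the detailed-balance manifold.

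That non-degeneracy step is where your argument has a genuine gap. You argue that distinct shortest paths carry distinct edge multisets, so $\sum_P\prod_{(i,j)\in P}A_{ij}$ is a nonzero polynomial in the variables $A_{ij}$, and you propose to realize a dominant monomial by scaling reaction speeds. But the $A_{ij}$ are not free coordinates on the detailed-balance manifold: with multi-molecular reactions a single reaction $R$ contributes simultaneously to several entries $A_{\ell j}$, so two distinct vertex paths can be induced by the \emph{same} sequence of reactions, and then no rescaling of speeds (which must multiply $K_R$ and $K_{-R}$ together to preserve detailed balance) can separate their contributions. This is not a hypothetical: in the paper's worked example after the theorem, $(1)\leftrightarrows(2)+(3)$, $(2)\leftrightarrows(3)+(4)$, the two shortest paths $1\to2\to4$ and $1\to3\to4$ both use $R_1$ then $R_2$, and the coefficient is proportional to $K_1K_2e^{-E(3)}\bigl(e^{-E(3)}-e^{-E(2)}\bigr)$, which is independent of any admissible speed rescaling and vanishes exactly when $E(2)=E(3)$. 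Your fallback of passing to higher order in $t$ does not repair this (there is no reason the higher-order coefficients are less constrained, and the closing claim that connectedness guarantees some nontrivial finite-order coefficient is essentially the assertion to be proved). The paper's resolution, which your parametrization already contains but your degeneration argument does not use, is to perturb the \emph{energies} $E(i)$ along the vertices of one chosen shortest path in addition to the speeds of its reactions: differentiating $c_{L,p}$ first in the speed perturbations isolates the chosen reaction sequence, and then differentiating in the energy perturbations isolates a single vertex path, whose contribution is a nonzero product. With that modification your analyticity argument closes.
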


\begin{proof}
The idea of the proof is the following. 
Initially the system is at steady state. We consider a small perturbation of the substance $(1)$ and study the evolution of the concentration of $(p) $ for small times. Since we are considering a small perturbation and we are considering small times we can study the linearization of the system of ODEs \eqref{ODEs signalling}. 
As a second step we define a hierarchy on the elements $\Omega \setminus \{ 1\} $ that are connected to the substance $(1)$. In particular the hierarchy is based on the distance of the elements from $1$. The distance here has to be understood as distance in the graph $\mathcal G_\Omega $. 
The reason why we introduce this hierarchy is that the concentration of substances that are closer to $1$ change faster than the concentration of substances that are far from the signal $(1)$. We will make this idea rigorous in Step 2. Once the hierarchy is 
defined we can prove that, unless the parameters are fine tuned, we have a response in each point in $\Omega \setminus \{1\} $ that is connected with the substance $(1)$. This is done in Step 3. 

\textbf{Step 1: Linearization.} \\
Since the kinetic system satisfies the detailed balance property we have that $n_0=e^{-E} $ where $E$ is an energy of $(\Omega , \mathcal R, \mathcal K)$, i.e. $E$ is a solution to \eqref{eq:energy}. 
We can write the solution $n$ to the signalling system \eqref{ODEs signalling} as $n= e^{- E} (1 + \overline\varphi ) $ where, since by assumption $n_1 - n_1(0) \sim e^{E(1)} t $ as $t \rightarrow 0$, we have that  $\overline\varphi_1(t) \sim e^{E(1)} t $ as $ t \to 0 $. Moreover by the continuity in time of $n$ we have that $\|\overline \varphi (t) \| \to 0 $ as $ t \to 0$. 
Notice that since the detailed balance property holds we deduce that for every reaction $R \in \mathcal R $ it holds that   
\[
\log \left( \frac{K_{-R}}{K_R} \right) =\mathcal E(R)= \sum_{i \in \Omega} R(i) E(i). 
\]
Substituting this in \eqref{ODEs fluxes} we deduce that 
  \begin{equation} \label{flux DB}
    J_R(n) =K_R  \prod_{i \in I(R)} (n_i)^{- R(i) }  \left(1-  \prod_{i\in \Omega} n_i^{ R(i) } e^{ R(i) E(i) } \right), \forall n \in \mathbb R_*^N.
    \end{equation} 
Using the fact that $n = e^{- E} ( 1 + \overline \varphi) $ we obtain that
\begin{align*}
J_R(n)&=K_R \prod_{i \in I(R)} e^{ E(i) R(i)} (1+\overline\varphi_i)^{- R(i)} \left( 1- \prod_{k \in \Omega} (1+\overline\varphi_k{R(k)})\right) + o(\|\overline\varphi \|) \\
 &=- K_R \prod_{i \in I(R)} e^{ E(i) R(i)} (1-\overline \varphi_i R(i) )  \prod_{k \in \Omega} \overline \varphi_k{R(k)} + o(\|\overline \varphi \|) \\
 &= - K_R e^{ \sum_{i \in I(R)} E(i) R(i)}  \sum_{k \in \Omega} \overline \varphi_k{R(k)} + o(\|\overline \varphi \|). 
\end{align*}
Therefore equation \eqref{eq:fluxes} implies that
\begin{align*}
   e^{-E(\ell) } \frac{d  \overline\varphi_\ell }{ dt } = \sum_{R \in \mathcal R_s } R(\ell) J_R(n ) =-  \sum_{R \in \mathcal R_s } K_R \prod_{i \in I(R)}e^{  E(i) R(i)}   R (\ell)\sum_{k \in \Omega}\overline\varphi_k{R(k)} + o(\|\overline \varphi \|).
\end{align*}
Notice that to prove that $\sup_{ t >0} |\overline \varphi_p (t) |>0$ it is enough to show that $\sup_{t \in (0,T]} |\varphi_p(t) |>0 $ for every $T>0$, where $\varphi$ is the solution to the following linearized problem 
\begin{align} \label{eq:linearized odes}
\begin{cases}\varphi_1 (t) &= e^{ E(1)} t \\
\frac{d \varphi_\ell }{ dt } &=  \sum_{j \in \Omega } A_{\ell j } \varphi_j  \quad \ell \in  \Omega \setminus \{ 1 \} 
\end{cases} \end{align}
where 
\[ 
A_{\ell j } = - e^{E(\ell ) }  \sum_{ R \in \mathcal R_s}  R(j) R(\ell) K_R \alpha_R \  \text{ for } \ell \in \Omega \setminus \{ 1 \} \text{ and } j \in \Omega
\]
and where we are using the notation 
\begin{equation} \label{alphaR}
\alpha_R= \prod_{ i \in I(R) } e^{E(i) R(i)}.
\end{equation}

\textbf{Step 2: Hierarchy of responses.} \\
In order to prove that for every $T>0 $ we have that $\sup_{t \in (0, T] } |\varphi_p(t) | >0$ we construct a hierarchy of elements of $\Omega $ based on their distance from the signal $1 $. To this end we define the sets $S_0:=\{1\}$
and 
\[ 
S_n:= \{  i \in \Omega \setminus S_{n-1}: \exists R \in \mathcal R \text{ and } k \in S_{n-1} \text{ s.t. } R(i) R(k) \neq 0  \} 
\]
for every $n \in \{1, \dots , L-1  \}$ where $L $ is the minimal length of the walks connecting $1$ with $p$ in the graph $\mathcal G_\Omega $.
Moreover we assume that $S_L:=\{ p \} $. 

We are going to prove that  for every $\ell \in S_{n}$, with $n \in \{ 1, \dots, L \}$, we have that
\begin{equation} \label{eq:higer order terms linearized}
\varphi_\ell (t)= c_{ n +1, \ell  }(\mathcal K, E)  t^{n+1} + o(t^{n+1})
\end{equation}
where the constant $c_{ n } (\mathcal K ,  E) \geq 0$ is given by 
\begin{align} \label{eq:c}
 c_{ n, \ell  }(\mathcal K, E ):=& e^{E(1)} \frac{1}{n!}  \sum_{j_0 \in S_0} \sum_{ j_{1} \in S_{1} } A_{j_1 j_0} \sum_{ j_{2} \in S_{2} } A_{j_2 j_1 } \dots \sum_{ j_{n-2} \in S_{n-2} } A_{j_{n-2} j_{n-3} } \sum_{ j_{n-1} \in S_{n-1} } A_{j_{n-1} j_{n-2} } A_{ \ell j_{n-1}} \\
 =&e^{E(1)} \frac{1}{n!} \sum_{j \in X_n} a(j) \nonumber 
\end{align}
where $X_n = S_0 \times S_1 \times  \dots \times S_{n-1} \times \{ \ell \}  $, therefore $j =(j_0, j_1, \dots, j_{n-1}, j_n) $ with $j_0=1 $, $j_n=\ell $ and $j_i \in S_i $ for every $i \in \{ 1, \dots , n-1\} $. 
Moreover for $j \in X_n $ we have 
\[
a(j):= \prod_{i=1}^n A_{j_i, j_{i-1} }. 
\]
In other words $c_{n, \ell }  (\mathcal K , E)$ is defined inductively as follows.
We assume $c_{1, \ell } (\mathcal K , E) = e^{E(1)} A_{\ell 1 }$ for every $\ell \in S_1 $ and that $c_{n, \ell } (\mathcal K , E) = \sum_{j_{ n-1} \in S_{n-1}} c_{n-1, j_{n-1}}(\mathcal K , E) A_{\ell j_{n-1}}$ for every $n > 1 $ and $\ell \in S_n$. \\

We prove now \eqref{eq:higer order terms linearized}.
We start by proving the equality for $ n =1 $. More precisely we prove that 
\begin{equation} \label{varphi_ell1}
\varphi_\ell (t)= c_{1, \ell} (\mathcal K , E)  t^2+ o(t^2) \quad \forall \ell \in S_1.
\end{equation}
The system of equations \eqref{eq:linearized odes} implies that 
\[ 
\frac{d \varphi_\ell }{dt } = \sum_{j \in \Omega } A_{\ell j } \varphi_j = A_{\ell 1 } \varphi_1+ \sum_{j \in \Omega \setminus \{ 1 \}  } A_{\ell j } \varphi_j =t  e^{E(1)}  A_{\ell 1 } + \sum_{j \in \Omega \setminus \{ 1 \}  } A_{\ell j } \varphi_j. 
\]
Notice that by the definition of $c_{1, \ell } (\mathcal K , E) $ we have 
\[
 c_{1, \ell} (\mathcal K , E) = \frac{1}{2} e^{E(1) } A_{\ell 1 }, \quad \forall \ell \in S_1.
\]
As a consequence 
\[
\varphi_\ell (t)= c_{1, \ell}(\mathcal K , E) t^2 + \sum_{j \in \Omega \setminus \{ 1 \}  } A_{\ell j } \int_0^t \varphi_j(s) ds, \quad \forall \ell \in S_1.
\]
Notice that since for every $j \in \Omega \setminus (\{1\} \cup S_1)$ it holds that $\varphi_j (t)=o(t) $ as $t \to 0$ we have that \eqref{varphi_ell1} holds. If $L=1 $, then equality \eqref{eq:higer order terms linearized} follows for $\ell \in S_1$. 

Assume now that $L > 1$. 
We now prove \eqref{eq:higer order terms linearized} by induction. Therefore let us assume that for every $k \in S_{n-1} $ it holds that 
\[
\varphi_k (t)= c_{n-1, k}(\mathcal K , E) t^{n} + o(t^{n}). 
\] 
The equation for $\ell \in S_n $ is the following 
\begin{align*}
\frac{d \varphi_\ell }{ dt } = \sum_{ j \in S_{n -1} } A_{\ell j } \varphi_j + \sum_{i=n}^{L} \sum_{ j \in S_i } A_{\ell j } \varphi_j =   \sum_{ j \in S_{n -1} } A_{\ell j }  c_{n-1, j}(\mathcal K , E) t^{n} + o(t^{n}). 
\end{align*}
In the equality above we have used the fact that $\varphi_j (t) = o(t^n) $ as $t \to 0$ if $ j \in S_n$. 
Using the definition of $c_{n, \ell  } $ and integrating in time the equality above we obtain equality \eqref{eq:higer order terms linearized} for every $n \in \{ 0, \dots , L \} $.

\textbf{Step 3: There exists a rate function $\overline{ \mathcal K} $ as in the statement of the theorem and such that $c_{L,p} (\overline {\mathcal K}, \overline{E})  \neq 0 $.} \\
In order to prove step 3 it is useful to define the following set of reactions
\begin{equation} \label{gamma} 
    \Gamma_{n, n+1} := \left\{ R\in \mathcal R_s : \exists k \in S_{n+1}, \ell \in S_{n} \text{ s.t. } R(k) R(\ell) \neq 0 \right\} \text{ for every } n \in \{ 0, \dots, L \}. 
\end{equation}
We stress that if $n_1 \neq n_2 $, then $ \Gamma_{n_1, n_1+1} \cap  \Gamma_{n_2, n_2+1}  = \emptyset.$
Indeed, assume by contradiction that there exists $R \in \mathcal R_s $ such that $R \in \Gamma_{n_1,n_1+1} \cap \Gamma_{n_2,n_2+1} $ where $n_1\neq n_2 $, where without loss of generality we can assume that $n_1 < n_2$. This implies that there exist $k_1 \in S_{n_1} $ and $\ell_1 \in  S_{n_1+1} $ such that $R(k_1) R(\ell_1) \neq 0$ and there exist $k_2 \in S_{n_2} $ and $\ell_2 \in  S_{n_2+1} $ such that $R(k_2) R(\ell_2) \neq 0$. Notice that by the definition of the sets $S_n $ we have that since $\ell_2 \in S_{n_2+1} $, then $\ell_2 \notin S_{n_1+1} $. However notice that $R(\ell_2) R(k_1) \neq 0$, which implies that  $\ell_2 \notin S_{n_1+1} $.
This is a contradiction, hence the sets $\{ \Gamma_{n, n+1} \}_{n=0}^L$ are disjoint sets. 

The notation above allows us to rewrite the formula for some of the elements of the matrix $A$, i.e. for the elements $A_{\ell j } $ with $j \in S_n $ and $\ell \in S_{n+1} $. 
Indeed we have that 
\[
A_{\ell j } = - e^{E(\ell ) }  \sum_{ R \in \Gamma_{n, n+1} }  R(j) R(\ell) K_R \alpha_R \  \text{ for } \ell \in S_{n+1} \text{ and } j \in S_n \text{ and } 0 \leq n \leq L-1, 
\]
where we recall that $\alpha_R$ is given by \eqref{alphaR}.
We now substitute this formula for $A_{\ell j } $ in \eqref{eq:c}. 
In this way we obtain that 
\begin{align*}
c_{L, p } (\mathcal K , E)  &= 
 e^{E(1)} \frac{1}{L!} \sum_{j \in X_L}\prod_{i=1}^L A_{j_i j_{i-1}}= e^{E(1)} \frac{(-1)^L}{L!} \sum_{j \in X_L} \prod_{i=1}^L e^{E(j_i) } \sum_{R_i \in \Gamma_{i-1,i}} \alpha_{R_i} K_{R_i} R_i(j_i) R_i(j_{i-1}) = \\
&=(-1)^L  \frac{1}{L!} \sum_{ j \in X_L }   \sum_{ R_1 \in \Gamma_{0, 1} }  \sum_{ R_2 \in \Gamma_{ 1, 2 } } \dots   \sum_{ R_{L-2} \in \Gamma_{L-3,L-2} }    \sum_{ R_{L-1} \in \Gamma_{ L-2, L-1 } } \sum_{ R_{L} \in \Gamma_{ L-1, L } } e^{\sum_{i=0}^{L} E(j_i) }   \\
& \prod_{i=1}^L \alpha_{R_i} K_{R_i} R_i(j_{i-1}) R_1(j_i) 
=\sum_{j\in X_L} \sum_{r \in Y_L} v(r,j) 
\end{align*}
where $Y_L:=\Gamma_{0,1} \times \Gamma_{1,2} \times \dots \times \Gamma_{L-2,L-1} \times \Gamma_{L-1,L}$ and where $r =(R_1, R_2, \dots, R_{L-1}, R_L)$ with $R_i \in \Gamma_{i-1,i} $ for $i\in \{ 1, \dots, L\}$. Moreover given $j \in X_L $ and $r \in Y_L $ we have that 
\begin{align*}
v(r, j)&:= e^{\sum_{i=0}^{L} E(j_i) } \prod_{i=1}^L \alpha_{R_i} K_{R_i} R_i(j_{i-1}) R_i(j_i) =e^{\sum_{i=0}^{L} E(j_i) } \prod_{i=1}^L \alpha_{R_i} K_{R_i} R_i(j_{i-1}) R_i(j_i) \\
&= e^{\sum_{i=0}^{L} E(j_i) } \prod_{i=1}^L e^{\sum_{k \in I(R_i)} E(k) R_i(k)} K_{R_i} R_i(j_{i-1}) R_i(j_i) . 
\end{align*}
Notice that we have two possibilities, either $c_{L, p } (\mathcal K , E) \neq 0$ or $c_{L, p } (\mathcal K , E)= 0$.
If  $c_{L, p } (\mathcal K , E) \neq 0$ then we have that \eqref{eq:higer order terms linearized} implies that for every $T>0 $ it holds that $\sup_{ t \in [0, T] } |\varphi_p(t)|>0$. Hence the statement of the theorem follows. 

Assume instead that $c_{L, p } (\mathcal K , E) =0$. 
We want to construct a perturbation $\overline{\mathcal K}$ of the rate function $\mathcal K$ that is such that $c_{L, p }(\overline {\mathcal K}  , \overline E) \neq 0 $. 
Moreover, we want that the perturbed kinetic system $(\Omega, \mathcal R, \overline {\mathcal K} ) $ satisfies the detailed balance property. 
To construct this perturbation we argue as follows. We select the shortest path $\pi$ that connects $1 $ with $p$ in the graph $\mathcal G_\Omega$. Notice that this path $\pi$ identifies a sequence of reactions $\{ \overline  R_i\}_{i=1}^L $ and a sequence of points $\{ j_i^\pi\}_{i=0}^{L} $ in $\Omega$. Notice that by definition we have that $\overline R_i \in \Gamma_{i-1,i} $ for every $i \in \{ 1, \dots, L \} $. 
We modify the rates of the reactions $\{ \overline  R_i\}_{i=1}^L $. 
More precisely we consider the rate function $\overline {\mathcal K}: \mathcal R \rightarrow \mathbb R_+$ defined as 
\begin{align}
\overline {\mathcal K} (R) := \begin{cases} 
 \mathcal K(R) &\text{ if } R \notin \{ \overline R_i\}_{i=1}^L \text{ and } -R  \notin \{ \overline R_i\}_{i=1}^L  \\
 \mathcal K(R) + \delta_R &\text{ if } R \in \{ \overline R_i\}_{i=1}^L. 
\end{cases} 
\end{align}
Finally, we assume that 
\[ 
\overline{\mathcal K} ( -R) := \frac{\overline {\mathcal K} (R)  \mathcal K (-R)}{ \mathcal K (  R) } \exp\left( \sum_{i \in \pi } R(i) \varepsilon_i\right) \text{ if } R \in \{ \overline R_i\}_{i=1}^L
\]
for some $\varepsilon_i >0$, where $i \in \pi $. 
Notice that by construction the kinetic system $(\Omega, \mathcal R, \overline {\mathcal K} ) $ satisfies the detailed balance property. Indeed, by construction, we have that
\[
\overline {\mathcal E }(R) :=  \log(\overline {\mathcal K} (- R)/\overline {\mathcal K} (R)) = \log( {\mathcal K} ( - R)/ {\mathcal K} (R)) +   \sum_{i \in \pi } R(i) \varepsilon_i = \mathcal E (R)+   \sum_{i \in \pi } R(i) \varepsilon_i.
\] 
We recall that $\mathcal E$ is the vector of the energies of the reactions in the network and is defined as in \eqref{eq:energy} w.r.t. $(\Omega , \mathcal R, \mathcal K ) $. 
Hence \eqref{eq:energy} implies that 
\[ 
\overline {\mathcal E }(R)  =   \sum_{i \in \pi } R(i)E(i)+  \sum_{i \in \pi } R(i) \varepsilon_i
\] 
Therefore $\overline E = E+ \varepsilon $ is an energy of the perturbed kinetic system that, as a consequence, satisfies the detailed balance condition. 

The perturbed constant $ c_{L,p} (\overline {\mathcal K}, \overline E)  $ is given by 
\begin{align*}
 c_{L,p} (\overline {\mathcal K}, \overline E) &=\sum_{j\in X_L} \sum_{r \in Y_L}  e^{\sum_{k=0}^{L} \overline  E(j_k) } \prod_{i=1}^L e^{\sum_{k \in I(R_i)} \overline E(k) R_i(k)} \overline K_{R_i} R_i(j_{i-1}) R_i(j_i) \\
 &= \sum_{r \in Y_L}\prod_{\ell=1}^L e^{\sum_{k \in I(R_\ell)} \overline E(k) R_\ell(k)} \sum_{j\in X_L}   e^{\sum_{k=0}^{L} \overline  E(j_k) } \prod_{i=1}^L  \overline K_{R_i} R_i(j_{i-1}) R_i(j_i).   
\end{align*}

We now differentiate $ c_{L,p} (\overline {\mathcal K}, \overline E) $ with respect to the perturbations $\delta_{\overline R_k}$. We recall that we perturb on the reactions that define a path connecting $1$ with $p $ that has minimal length. Then we obtain that 
\begin{align*}
\left( \prod_{k=1}^L \frac{d}{d \delta_{\overline R_k} } \right)  c_{L,p} (\overline {\mathcal K}, \overline E) =& \sum_{r \in Y_L}\prod_{\ell=1}^L e^{\sum_{k \in I(R_\ell)} \overline E(k) R_\ell(k)} \sum_{j\in X_L}   e^{\sum_{k=0}^{L} \overline  E(j_k) }\prod_{i=1}^L R_i(j_{i-1}) R_i(j_i) \cdot  \\
& \cdot \left(  \prod_{k=1}^L \frac{d}{d \delta_{\overline R_k} }  \right) \prod_{i=1}^L  \overline K_{R_i} 
 \end{align*} 
Notice that 
\[
 \left(  \prod_{k=1}^L \frac{d}{d \delta_{\overline R_k} }  \right) \prod_{i=1}^L  \overline K_{R_i} \neq 0 \iff R_i=\overline R_ i \ \forall i \in \{ 1, \dots , L\}. 
\]
As a consequence we deduce that 
\begin{align*}
\left( \prod_{k=1}^L \frac{d}{d \delta_{\overline R_k} } \right)  c_{L,p} (\overline {\mathcal K}, \overline E) &= \prod_{\ell=1}^L e^{\sum_{k \in I(\overline R_\ell)} \overline E(k) \overline R_\ell(k)} \sum_{j\in X_L}   e^{\sum_{k=0}^{L} \overline  E(j_k) }\prod_{i=1}^L \overline R_i(j_{i-1}) \overline R_i(j_i). 
 \end{align*} 

 We use the notation
 \begin{align*} 
\Delta_\delta c_{L,p} (\overline {\mathcal K }, \overline E)  :=  \sum_{j\in X_L}   e^{\sum_{k=0}^{L} \overline  E(j_k) } \prod_{i=1}^L R_i(j_{i-1}) R_i(j_i). 
\end{align*}
Notice that if $\Delta_\delta c_{L,p} (\overline {\mathcal K } , \overline E )\neq 0$. Then the proof is finished. 
Indeed, this would imply that 
\[ 
\left( \prod_{k=1}^L \frac{d}{d \delta_{\overline R_k} } \right)  c_{L,p} (\overline {\mathcal K}, \overline E)
\neq 0 \] 
for sufficiently small values of $\delta $. Since for $\delta =0$ we have that $c_{L,p} (\overline {\mathcal K } , \overline E ) =c_{L,p} (\mathcal K  ,  E ) =0$
we have that for sufficiently small $\delta $ it holds that $ c_{L,p} (\overline {\mathcal K } , \overline E )\neq 0$. 

Therefore we assume now that $\Delta_\delta c_{L,p} (\overline {\mathcal K } , \overline E )= 0$.
We now differentiate $\Delta_\delta c_{L,p} (\overline {\mathcal K } , \overline E )$ with respect to the energy changes along the vertices of the path $\pi$, i.e. we consider 
\begin{align*}
\left( \prod_{i \in \pi } \frac{d}{d \varepsilon_i}    \right) \Delta_\delta c_{L,p} (\overline {\mathcal K}, \overline E) &= \sum_{j\in X_L} \left( \prod_{i \in \pi } \frac{d}{d \varepsilon_i} \right)  e^{\sum_{k=0}^{L} \overline  E(j_k) } \prod_{i=1}^L \overline R_i(j_{i-1}) \overline R_i(j_i)
 =  e^{\sum_{k=0}^{L} \overline  E(j^\pi_k) } \prod_{i=1}^L \overline R_i(j^\pi_{i-1}) \overline R_i(j^\pi_i) \\
 & \neq 0
\end{align*} 
where $j^\pi_i $ are the vertices in the path $\pi$. 

As a consequence we deduce that for sufficiently small values of $\varepsilon >0 $ we have that $\Delta_\delta c_{L,p} ( \overline {\mathcal K}, \overline E  ) \neq 0$. Hence the desired conclusion follows, i.e. we have that for sufficiently small values of $\varepsilon$ and of $\delta $ we have that $c_{L,p} ( \overline {\mathcal K}, \overline E  )  \neq 0$.

In particular we have proven that there exists a rate function $\overline {\mathcal K} $ that satisfies the assumption of the Theorem and that is such that 
\[
\varphi_{L, p} (t) = \frac{1}{(L+1)!}c_{L+1, p}(\overline{\mathcal K}, \overline E) t^{L+1} + o(t^{L+1}), \quad t >0
\]
where $c_{L+1, p}(\overline{\mathcal K}, \overline E) \neq 0$ and where  $\varphi$ is the solution of \eqref{eq:linearized odes} corresponding to the kinetic system $(\Omega , \mathcal R, \overline{\mathcal K })$. 
As a consequence we deduce that for every $T>0 $ we have $\sup_{t \in (0,T]} |\varphi_p(t) |>0$.  Hence the solution of the system of ODEs \eqref{ODEs signalling} induced by $(\Omega, \mathcal R, \overline{\mathcal K }) $ is such that the property 3 in Definition \ref{def:adaptation} holds, i.e. $ \sup_{ t >0} | n_p (t ) - N(p) | > 0$.  
\end{proof}
We now present an example of a closed kinetic system that is such that the graph $\mathcal G_\Omega $ is connected and the property $3$ holds generically, i.e. if property $3$ does not hold then it is possible to construct a perturbation of the reaction rates that is such that the perturbed kinetic system satisfies the property $3$, i.e. the system reacts to changes in the signal. The purpose of this example is also to illustrate the proof of Theorem \ref{thm:reaction} in a simpler situation. 
\begin{example}
    Consider the chemical network corresponding to the following reactions
    \[
    (1) \leftrightarrows (2)+(3), \quad (2) \leftrightarrows (3)+(4). 
    \]
    Let $R_1=(1,-1,-1,0)^T$ and $R_2=(0,1,-1,-1)^T$. Notice that the chemical network is connected. Moreover, we assume that 
    \[
    K_{- R_1} = K_{R_1} e^{E(1)- E(2) - E(3) }, \text{ and }     K_{- R_2} = K_{R_2} e^{E(2) - E(3) - E(4) } 
    \]
    for some vector $E \in \mathbb R^4$.
    Notice that in this way we assume that the detailed balance property holds by construction. In particular the energies of the reaction $R_1 $ and $R_2$ are given by 
    \[
    \mathcal  E (R_1) = E(1)- E(2) - E(3), \text{ and } \   \mathcal  E (R_2)= E(2) - E(3) - E(4). 
    \]
    To simplify the notation we indicate with $K_1 $ the rate $K_{R_1} $ and with  $K_2$ the rate $K_{R_2} $. 
    We can write the system of ODEs associated with the kinetic system 
    \begin{align*}
        \frac{d  n_1}{d t}&=K_1  \left( n_2 n_3 - e^{\mathcal E(R_1) } n_1  \right) \\
       \frac{d  n_2}{d t}&=K_1  \left( e^{\mathcal E(R_1) } n_1  -  n_2 n_3\right)   + K_2 \left(n_3 n_4 - e^{\mathcal E(R_2) } n_2  \right)       \\  
         \frac{d  n_3}{d t}&=K_1  \left( e^{\mathcal E(R_1) } n_1  -  n_3 n_3\right)   + K_2 \left(e^{\mathcal E(R_2) } n_2 - n_3 n_4  \right)       \\  
          \frac{d  n_4}{d t}&= K_2 \left(e^{\mathcal E(R_2) } n_2 - n_3 n_4  \right)       \\  
    \end{align*}
    We linearize the system around the steady state $ N=e^{- E} $. In particular we consider $n_k =(1+ \varphi_k ) e^{- E(k) } $ with $\varphi_1 (t) = t $. Hence
     \begin{align*}
       \varphi_1 (t) &= t  \\
 \frac{d  \varphi_2}{d t}&=K_1 e^{- E(3) } \varphi_1  + l.o.t. \\  
     \frac{d  \varphi_3}{d t}&=K_1 e^{- E(2) } \varphi_1  + l.o.t. \\    
          \frac{d  \varphi_4}{d t}&= K_2 e^{-E(3) }\left( \varphi_2 - \varphi_3  \right)     + l.o.t.    \\  
    \end{align*} 
As a consequence we obtain that $\varphi_2  \sim \frac{K_1}{2} e^{- E(3) } t^2 $ as $ t \to 0$ and $\varphi_3 =\frac{K_1}{2} e^{- E(2) } t^2 $ as $ t \to 0$. 
Therefore 
\[ 
\varphi_4 (t)\sim \frac{K_1 K_2 e^{- E(3)}}{2} \left( e^{- E(3)} - e^{-E(2)} \right) \quad \text{ as } t \to 0. 
\] 
As a consequence we have that $\varphi_4(t) \neq 0 $ for sufficiently small times, except if $E(3)=E(2)$, i.e. unless the reaction rates are fine tuned, we have that the concentration of substance $(4)$ changes in time of the signal concentration changes changes according to an admissible signal $f$.
\end{example}

\subsection{Stable adaptation in non-conservative systems that satisfy the detailed balance property}\label{sec:db and adaptation}
In this section we study whether a signalling system with the detailed balance property (or, more precisely, the underlying kinetic system satisfies the detailed balance property) can have the adaptation property in a stable manner, i.e. when the parameters are not fine tuned.  We have multiple results in this direction. 
First of all we prove that every system that satisfies the detailed balance property, is connected and that is non conservative and is such that the signal appears in one of the conservation laws, satisfies the adaptation property, unless the parameters are fine tuned. 

As a second result we prove that if a signalling system is such that the underlying kinetic systems is closed and its  conservation laws satisfy a suitable non-degeneracy assumption, then it does not have the adaptation property in a stable manner.  
The non-degeneracy assumption that we have to make on the conservation laws is a rather natural assumption. In particular, we assume that every conservation law can be written as the sum of conservation laws that contain both the product and the signal, i.e. $ \forall m \in \mathcal B  $  we have $m(1)\neq 0 $ and $m(p) \neq 0$, where $1 $ is the signal and $p $ is the product. We will discuss this assumption in detail in Section \ref{sec:adaptation in closed systems}. 

\begin{proposition} \label{prop:DB and adaptation}
Assume that the kinetic system $(\Omega, \mathcal R, \mathcal K) $ satisfies the detailed balance property, assume that the chemical network $(\Omega, \mathcal R )$ is connected and is such that $\mathcal M \neq \{ 0\} $, without loss of generality we assume that there exists a $m \in \mathcal M $ such that $m(1)>0$. 
Moreover assume that the chemical network $(\Omega, \mathcal R,  \mathcal K) $ is not conservative.  
Assume that the function $n_1=f$ is an admissible signal. 
    Then we have that  either \begin{itemize}
    \item there exists a $p \in \Omega \setminus \{ 1\} $ such that the network $(\Omega, \mathcal R, \mathcal K , n_1 (t) ) $ satisfies the adaptation property with respect to $p$; 
    \item or alternatively, we have that for every $\delta >0$ there exists a rate function $ \overline {\mathcal K} : \mathcal R \rightarrow \mathbb R_+$ satisfying the assumptions of Theorem \ref{thm:reaction} and such that  $(\Omega , \mathcal R , \overline{\mathcal K}, n_1(t) ) $ satisfies the adaptation property. 
    \end{itemize} 
\end{proposition}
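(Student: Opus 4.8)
The plan is to check the three conditions of Definition \ref{def:adaptation} for a suitable product and to organize them around the stated dichotomy: convergence (condition~1) and the return of the product to its pre-signal value (condition~2) will hold for \emph{every} rate function compatible with the hypotheses, whereas a genuine response (condition~3) is exactly the content of Theorem \ref{thm:reaction} and is what may force the passage to a perturbed rate $\overline{\mathcal K}$. Condition~1 is immediate: since $(\Omega,\mathcal R,\mathcal K)$ satisfies detailed balance, $\mathcal M\neq\{0\}$, and $n_1=f$ is an admissible signal (hence satisfies Assumption \ref{ass signal}), Proposition \ref{prop:property 1 adaptation} applies and gives $\lim_{t\to\infty}n(t)=e^{-E}=:N[\overline n_1]$, where $E$ is the unique energy fixed by \eqref{cons laws when fluxes2}. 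Moreover this conclusion is insensitive to replacing $\mathcal K$ by any $\overline{\mathcal K}$ that still satisfies detailed balance, because $\mathcal M$ is unaffected.

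The heart of the argument is condition~2, and this is where non-conservativity enters. Because the system is not conservative it exchanges a substance with the environment; concretely there is a substance $q\in\Omega$ that does not appear in any conservation law, i.e. $m(q)=0$ for every $m\in\mathcal M$. I take the product to be $p:=q$. Since the signal \emph{does} appear in a conservation law (we are given $m(1)>0$ for some $m\in\mathcal M$) we have $p\neq1$, and connectivity of $\mathcal G_\Omega$ guarantees that $1$ and $p$ are connected. Now recall from Lemma \ref{lem:db special ss} and the remark following it that any two energies differ by an element of $\mathcal M$; thus the energy $E$ produced by Proposition \ref{prop:property 1 adaptation} may be written $E=E_0+\mu$ with $\mu\in\mathcal M$, where the initial datum is $n_0=e^{-E_0}$. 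Evaluating at $p=q$ and using $\mu(q)=0$ yields $E(q)=E_0(q)$, hence
\[
N[\overline n_1](p)=e^{-E(q)}=e^{-E_0(q)}=n_0(q)=n_0^T e_p,
\]
and this holds for \emph{every} limiting signal value $\overline n_1$, which is precisely condition~2. The decisive point is that $\mathcal M$ depends only on the reaction vectors and not on the rates, so the identity $\mu(q)=0$, and therefore condition~2, survives verbatim under any perturbation of $\mathcal K$ that keeps the reaction set fixed.

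It remains to secure condition~3 and read off the dichotomy. If the solution of \eqref{ODEs signalling} for $(\Omega,\mathcal R,\mathcal K)$ already satisfies $\sup_{t>0}|n_p(t)-n_0^T e_p|>0$ with this $p$, then all three conditions hold for $\mathcal K$, giving the first alternative. Otherwise I invoke Theorem \ref{thm:reaction} for the connected pair $1,p$: for every $\delta>0$ it produces $\overline{\mathcal K}$ with $\|\mathcal K-\overline{\mathcal K}\|<\delta$, detailed balance preserved, and condition~3 valid for the perturbed signalling system (with initial datum the corresponding perturbed steady state $e^{-\overline E}$). For this $\overline{\mathcal K}$, conditions~1 and~2 continue to hold by the previous two paragraphs, so the perturbed system satisfies the adaptation property, giving the second alternative. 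I expect the main obstacle to be the middle step, namely extracting the unconserved substance $q$ from the failure of conservativity and confirming that $p:=q$ is admissible ($p\neq1$ and connected to the signal): condition~2 rests entirely on the coefficient of $q$ vanishing in \emph{every} conservation law, rather than on any finer, rate-dependent information, so the whole mechanism hinges on correctly identifying such a substance from the non-conservativity hypothesis.
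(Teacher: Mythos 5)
Your proposal is correct and follows essentially the same route as the paper's proof: condition~1 from Proposition \ref{prop:property 1 adaptation}, condition~2 by extracting from non-conservativity a substance $p$ with $m(p)=0$ for all $m\in\mathcal M$ and using that any two energies differ by an element of $\mathcal M$, and condition~3 from Theorem \ref{thm:reaction}. Your additional remarks that conditions~1 and~2 are rate-independent (so they survive the perturbation to $\overline{\mathcal K}$) make explicit a point the paper leaves implicit, but the argument is the same.
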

\begin{proof}
Let $\overline n_1$ be such that $\lim_{t \to \infty } f(t)= \overline n_1 $. Then 
Proposition \ref{prop:property 1 adaptation} implies that there exists an energy $\overline E $ of the kinetic system $(\Omega , \mathcal R, \mathcal K ) $ such that $\lim_{ t \to \infty } n (t) = e^{- \overline E} $. This in particular implies that the property 1 in the Definition \ref{def:adaptation} holds. 

Since the chemical network $(\Omega , \mathcal R)$ is non-conservative there exists a $p \in \Omega \setminus \{1\} $ that is such that $m (p )=0$ for every $m \in \mathcal M$. 
As a consequence we deduce that all the energies $E$ of the kinetic system $(\Omega, \mathcal R, \mathcal K ) $ are such that $E(p)=\overline E$. Indeed, assume by contradiction that there exists two energies $E_1$ and $E_2 $ satisfying \eqref{eq:energy} and that are such that $E_1(p) \neq E_2(p)$. This implies that $\textbf{R}^T (E_1- E_2 ) =0$, hence that  $E_1- E_2 \in \mathcal M $. However since the kinetic system is non conservative in $p$ we have that every $m \in \mathcal M $ is such that $m (p) =0 $. This implies that $E_1(p)=E_2(p)$, a contradiction. 
As a consequence we have that every steady state of $(\Omega, \mathcal R, \mathcal K ) $ is such that $N (p)=e^{- \overline E(p)} $. Hence property 2. in the Definition of adaptation holds. 

Finally since the assumptions of Theorem \ref{thm:reaction} hold, also the third property necessary to have the adaptation property holds. 
\end{proof}

\subsection{Stable adaptation is impossible in closed signalling systems} \label{sec:adaptation in closed systems}
In this section we prove that if the conservation laws of a kinetic system satisfy a suitable non-degeneracy property, then the fact that the kinetic system satisfies the adaptation property implies that the kinetic system is not closed. 
We start the section by explaining the property that the conservation laws must satisfy. 

First of all let us consider the  basis  of the extreme rays $\mathcal B = \{ m_i \}_{i=1}^L \subset \mathcal M_+$ of $\mathcal M$. 
Consider $i \in \Omega $ and let us define the vector $m^{(i)} \in \mathbb R_*^L $ that is such that
\[
m^{(i)}(j) := m_j(i). 
\]

\begin{definition}[$\mathcal M $-connectivity]
Let $(\Omega , \mathcal R ) $ be a chemical network and let $\mathcal M $ be the associated set of  conservation laws. 
We say the two elements $i, j \in \Omega $ are $\mathcal M $-connected if $ \langle m^{(i)} , m^{(j)}\rangle \neq 0 $. 
We say that the chemical network $(\Omega, \mathcal K ) $ is $\mathcal M $-connected if every $i , j \in \Omega $ are $\mathcal M $-connected. 
\end{definition}

\begin{example}[Adaptation property when $\mathcal M $-connectivity fails]
Consider the chemical system induced by the following reactions
\[
(1) + (3) \leftrightarrows (2) + (4),\quad (1) \leftrightarrows (2), \quad (3) \leftrightarrows (4). 
\]
Notice that the basis of extreme rays $\mathcal B $ is given by $\mathcal B =\{ (1,1,0,0)^T , (0,0,1,1)^T \} $. In particular we have that $m^{(1)} =(1,0)^T $ and that $m^{(4)} =(0,1)^T  $, therefore $ \langle m^{(1)} , m^{(4)} \rangle= 0 $. Hence the substance $(1)$ and the substance $(4)$ are not $\mathcal M $ connected. 
Moreover notice that the chemical network is conservative. We can associate to the network a rate function that satisfies the detailed balance condition. 
We can assume that $n_1=f$ is an admissible signal. Then by Proposition \ref{prop:property 1 adaptation} we know that the system converges to a steady state $N$ that satisfies \eqref{cons laws when fluxes2}, hence in particular we have that 
\[
N_3 + N_4 - n_3(0) - n_4(0)=0 \ \text{ and } \  \frac{N_3}{N_4}=\frac{n_3(0)}{n_4(0)}.
\]
This implies that $N_4 $ does not depend on the signal $n_1=f$. Since we have that Theorem \ref{thm:reaction} guarantees that property 2. in the Definition of adaptation holds we deduce that this kinetic system satisfies the adaptation property even if it is closed. 
\end{example}

\begin{theorem} \label{thm:no adaptation in closed systems}
    Let $(\Omega,\mathcal R, \mathcal K, n_s )$ be a signalling kinetic system is $\mathcal M $-connected. Assume that the adaptation property holds, then 
    \begin{itemize} 
    \item either the kinetic system $(\Omega,\mathcal R, \mathcal K )$ is not closed, 
    \item or for every $\delta >0 $ there exists a map $ \mathcal K_\delta : \mathcal R \rightarrow \mathbb R_+$ such that
            \[
            \|\mathcal K- \mathcal K_\delta \| < \delta 
            \]
            and such that $(\Omega, \mathcal R, \mathcal K_\delta)$ does not satisfy the adaptation property. 
    \end{itemize}
\end{theorem}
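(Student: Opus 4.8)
The plan is to reduce the adaptation requirement to a single algebraic condition on the detailed-balance weights, and then to show that this condition defines a nowhere-dense set of rate functions precisely because $\mathcal M$-connectivity forbids the conservation laws from factorizing.

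\textbf{Setup and reduction to the steady-state family.} Suppose the first alternative fails, so the system is closed; then detailed balance holds, $\mathcal M_+\cap\mathbb R_+^N\neq\emptyset$, and by Lemma \ref{lem:positivity of the conservation laws} the extreme rays $\mathcal B=\{m_1,\dots,m_L\}$ form a basis of $\mathcal M$ with $m_k\in\mathbb R_*^N$. Since there is a strictly positive conservation law, every substance lies in some support, i.e. $m^{(i)}\neq 0$ for all $i$. Collect the basis into $M\in\mathbb R^{N\times L}$ with columns $m_k$, so that $m^{(i)}=M^Te_i$ is the $i$-th row. The initial datum is a steady state $n_0=e^{-E}=:N$; set $D:=\operatorname{diag}(N)$. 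By Proposition \ref{prop:property 1 adaptation}, for a signal with $\lim_{t\to\infty}f=\overline n_1$ the solution converges to $e^{-\tilde E}=:\tilde N$, where $\tilde E$ is an energy (hence $\tilde E-E\in\mathcal M$) and, because the external flux acts only on substance $1$, $m^T(\tilde N-N)=m(1)\,\overline J^F_1$ for all $m\in\mathcal M$, together with $\tilde N_1=\overline n_1$. This realizes the steady state as a one-parameter family $\overline n_1\mapsto\tilde N(\overline n_1)$ with $\tilde N(N_1)=N$.

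\textbf{The linearized adaptation condition.} Property $2$ of Definition \ref{def:adaptation}, read for signals of varying asymptotic level near $N_1$, forces $\frac{d}{d\overline n_1}\tilde N_p=0$ at $\overline n_1=N_1$. Differentiating the two descriptions of $\tilde N$ at the base point, writing $\tilde E=E+\mu$ with $\mu\in\mathcal M$ gives $\tilde N'=-D\mu'$ with $\mu'=\sum_k a_k m_k\in\mathcal M$, while the conservation relations give $\langle m_j,\mu'\rangle_D=-m_j(1)\dot J$ for all $j$, where $\langle u,v\rangle_D:=u^TDv$ and $\dot J:=\frac{d}{d\overline n_1}\overline J^F_1$. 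With the Gram matrix $G:=M^TDM=\sum_i N_i\,m^{(i)}\otimes m^{(i)}$ and $b:=m^{(1)}$ this reads $Ga=-\dot J\,b$; imposing $\tilde N_1'=1$ fixes $\dot J\neq 0$, and one computes $\tilde N_p'=N_p\dot J\,\Phi(N)$ with
\[
\Phi(N):=(m^{(p)})^T G(N)^{-1} m^{(1)}.
\]
Thus at the given rates adaptation forces $\Phi(N)=0$, and conversely any perturbation producing $\Phi\neq 0$ makes $\tilde N_p$ depend on the signal to first order, so property $2$ fails.

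\textbf{Nonvanishing of $\Phi$ under $\mathcal M$-connectivity.} Perturbing $\mathcal K$ within the detailed-balance class moves $E$ modulo $\mathcal M$, and choosing the reference energy within its coset moves $E$ along $\mathcal M$; together $N=e^{-E}$ may be taken anywhere in a full neighborhood in $\mathbb R_+^N$. Hence it suffices to prove $\Phi$ is not the zero function of $N$. Writing $\Phi=P/\det G$ with $P:=\det(G)\,\Phi$ and expanding the bordered determinant $\det\!\begin{pmatrix} G & m^{(1)}\\ (m^{(p)})^T & 0\end{pmatrix}=-P$ by Cauchy–Binet, only index sets $U\cup\{1\}$ (rows) and $U\cup\{p\}$ (columns) with $|U|=L-1$, $1,p\notin U$, survive, giving
\[
P(N)=\pm\sum_{U}\det(M_{U\cup\{1\}})\,\det(M_{U\cup\{p\}})\prod_{a\in U}N_a.
\]
As distinct $U$ give distinct squarefree monomials, $P\equiv 0$ iff for every such $U$ at least one of $U\cup\{1\}$, $U\cup\{p\}$ fails to be a basis of the linear matroid $\mathcal A$ on $\Omega$ represented by the rows $m^{(i)}$. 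By the basis-exchange property this says exactly that $1$ and $p$ lie in no common circuit of $\mathcal A$, and by Tutte's theorem this means they lie in different connected components. A component decomposition is a partition $\Omega=\sqcup_r\Omega_r$ with $\mathcal M=\oplus_r\mathcal M_r$, $\mathcal M_r\subseteq\mathbb R^{\Omega_r}$; then each extreme ray of $\mathcal M_+$ is supported in a single block, so $\langle m^{(1)},m^{(p)}\rangle=0$. This is precisely the factorization of the conservation laws and contradicts $\mathcal M$-connectivity. Hence $\Phi\not\equiv 0$, and in fact $\Phi_{p'}\not\equiv 0$ for every candidate product $p'$, since connectivity places every pair $\{1,p'\}$ in a common circuit.

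\textbf{Conclusion and the main obstacle.} Because each $\Phi_{p'}$ is a nonzero rational (hence real-analytic) function of $N$, its zero set is nowhere dense, and a single generic small perturbation of the rates, together with the corresponding choice of reference steady state, produces $N$ with $\Phi_{p'}(N)\neq 0$ simultaneously for all $p'$. At such rates $\mathcal K_\delta$ with $\|\mathcal K-\mathcal K_\delta\|<\delta$, the steady-state concentration of every admissible product depends on the signal level to first order, so property $2$ fails for all products and the perturbed system does not satisfy the adaptation property, which is the second alternative. I expect the main obstacle to be the combinatorial core of the third step: identifying the surviving monomials of $P$ and translating $P\not\equiv 0$ into the matroid statement that $1$ and $p$ share a circuit, and then matching the component decomposition of $\mathcal A$ with the paper's factorization assumption (in particular verifying that extreme rays of a block-decomposed cone are single-block supported, which is what links $\Phi\equiv 0$ back to the failure of $\mathcal M$-connectivity). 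A secondary point requiring care is justifying that rate perturbations, combined with the freedom in the reference energy, genuinely sweep out a full neighborhood of weights $N$, so that the analytic-nonvanishing argument applies.
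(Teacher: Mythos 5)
Your proposal is correct and its first half coincides with the paper's: both reduce adaptation (via Proposition \ref{prop:property 1 adaptation} and differentiation of the conserved-quantity relations in the asymptotic signal level) to the vanishing of the bilinear quantity $\langle m^{(p)},\, \mathcal D(\zeta)^{-1} m^{(1)}\rangle$, where $\mathcal D(\zeta)=\sum_j \zeta_j\, m^{(j)}\otimes m^{(j)}$ is exactly your Gram matrix $G(N)$ with $\zeta=N=e^{-E}$. Where you genuinely diverge is in proving that $\mathcal M$-connectivity makes this quantity non-identically zero in $\zeta$. The paper argues locally and perturbatively: it defines an equivalence relation ($i\sim j$ iff $\langle m^{(i)},\mathcal D^{-1}m^{(j)}\rangle$ is nonzero at $\zeta$ or at points arbitrarily close to it), proves transitivity from the identity $\partial_{\zeta_j}\mathcal D^{-1}=-\mathcal D^{-1}(m^{(j)}\otimes m^{(j)})\mathcal D^{-1}$, and shows that having more than one class forces the extreme rays to split over a partition of $\Omega$, contradicting $\mathcal M$-connectivity. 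You instead argue globally and algebraically: the Cauchy--Binet expansion of $\det(G)\Phi$ into squarefree monomials indexed by $(L-1)$-subsets $U$ with coefficients $\pm\det(M_{U\cup\{1\}})\det(M_{U\cup\{p\}})$, the translation of $\Phi\equiv 0$ into the statement that $1$ and $p$ share no circuit of the linear matroid of the rows $m^{(i)}$, Tutte's component characterization, and then the same final step (extreme rays of a block-decomposed cone are single-block supported, hence $\langle m^{(1)},m^{(p)}\rangle=0$). The two endgames meet at the identical factorization/orthogonality contradiction. Your route buys an explicit polynomial formula for the obstruction and the stronger conclusion that the bad set of weights is a proper algebraic subvariety (so genericity holds in measure as well as topologically); the paper's route avoids matroid theory entirely and needs only first- and second-order derivative information at the given $\zeta$. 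Both proofs share the same mild gap, which you commendably flag and the paper elides: realizing an arbitrary nearby $z=e^{-\overline E}$ requires combining a perturbation of the rate ratios (moving $E$ transversally to $\mathcal M$) with a reselection of the initial steady state within the energy coset (moving $E$ along $\mathcal M$), so the counting $(N-L)+L=N$ of degrees of freedom should be spelled out.
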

\begin{proof}
Let us assume that $(\Omega,\mathcal R, \mathcal K )$ is closed, hence it satisfies the detailed balance property. 
Notice that Proposition \ref{prop:property 1 adaptation} together with the property 1 of Definition \ref{def:adaptation} implies that there exists an energy $E$ of $(\Omega, \mathcal R, \mathcal K )$ that is such that $n(t) \rightarrow e^{- E }$ as $ t \to \infty$. Let $\mathcal B = \{ m_k\}_{k=1}^L $. 
Using \eqref{cons laws when fluxes2} we deduce that for every $k \in \{ 1, \dots, L \} $ it holds that 
\[
m_k^T e^{-E } - m_k^T e^{- E_0 } =m^{(1)}_k  \overline J_1^F 
\]
where $n_0=e^{- E_0} $. 
Since $E $ and $E_0 $ are two energies of the same kinetic system we have that $E=E_0 + \sum_{i =1}^L m_i \eta_i $ for some chemical potentials $\{ \eta_i \}_{i=1}^L $ with $\eta_i \in \mathbb R$. 
Hence the above equality reduces to 
\[
m^{(1)}_k  \overline J_1^F = \sum_{j \in \Omega } m_k^{(j)} e^{- E_0 (j) } \left( e^{\sum_{i =1}^L m^{(j)}_i \eta_i} -1 \right). 
\]
Let us stress that the chemical potentials $\eta_i $ depend on $\overline J_1^F$.
Therefore differentiating the above equality with respect to $\overline J_1^F$ we deduce that the property 2 in Definition \ref{def:adaptation} implies that
\[
m^{(1)}_k = \sum_{j \in \Omega } m_k^{(j)} e^{- E (j) } \sum_{i=1}^L m_i^{(j)} \frac{\partial \eta_i}{\partial_{\overline J_1^F}}. 
\]
This equality can be rewritten as 
\[
m^{(1)}= M F(E) M^T \xi 
\]
where $M \in \mathbb R^{L \times N } $ with $M_{ij}=m^{(j)}_i$, $F(E)=\operatorname{diag} \{ e^{-E(i)} \}_{i=1}^N$ and where $\xi \in \mathbb R^L $ is defined as $ \xi(i)=\frac{\partial \eta_i}{\partial_{\overline J_1^F}}$.
On the other hand, since the adaptation property holds, we have that 
\[
m_k^{(N)} e^{-E(N) } - m_k^{(N)} e^{- E_0(N)} =0
\]
where $N=p$ is the product. 
As a consequence, differentiating with respect to $\overline J_1^F$ also this equation, we deduce that the adaptation property implies that 
\begin{align} \label{adapt proof eq}
    \begin{cases}
        M F(E) M^T \xi &=  m^{(1)}\\
         \langle m^{(N)}, \xi\rangle &=0.
    \end{cases}
\end{align}
Let us define the matrix $\mathcal D (\zeta) \in \mathbb R^{N \times N} $ as  
\[
\mathcal D (\zeta):= \sum_{j=1}^N \zeta_j \left( m^{(j)} \otimes m^{(j)} \right)= M F(E) M^T,
\]
where we use the notation $\zeta = e^{- E } $.

Notice that since the kinetic system $(\Omega, \mathcal R, \mathcal K ) $ is conservative, the matrix $\mathcal D(\zeta) $ is positive definite by definition, hence we can diagonalize it, i.e. 
\[
\mathcal D (\zeta)= \sum_{j=1}^N \lambda_j (\zeta) \left( \eta_j (\zeta) \otimes \eta_j (\zeta) \right) 
\]
where $\{ \eta_j(\zeta) \} $ is an orthonormal basis of $\mathbb R^N $ and $\{ \lambda_j (\zeta) \} $ are the eigenvalues of $\mathcal D(\zeta) $. Notice that since $\mathcal D(\zeta)$ is positive definite we have that for every $j \in \{ 1, \dots, N\} $ we have that $\lambda_j (\zeta) >0$. 
Moreover,  the inverse of $\mathcal D (\zeta)$ is given by 
\[
\mathcal D (\zeta)^{-1} = \sum_{j=1}^N \frac{1}{\lambda_j (\zeta)} \left( \eta_j (\zeta) \otimes \eta_j (\zeta) \right). 
\]
This, together with \eqref{adapt proof eq} implies that 
$\xi= \mathcal D (\zeta)^{-1}m^{(1)}$. 

Using the fact that $ \langle m^{(N)} , \xi \rangle  =0$ and that the matrix $\mathcal D(\zeta)^{-1} $ is symmetric we deduce that the property of adaptation implies that
\[
 \langle  m^{(N)},  \mathcal D (\zeta)^{-1}m^{(1)} \rangle = \langle m^{(1)}, \mathcal D (\zeta)^{-1} m^{(N)} \rangle =0 . 
\]

We now define the following equivalence relation. We say that $i \sim j $ if and only if we have that either 
\[
\langle m^{(i)} ,\mathcal D (\zeta)^{-1} m^{(k)} \rangle \neq 0 
\]
or, alternatively for every $\delta >0$ there exists a $ x \in B_\delta (\zeta) \subset \mathbb R^L$ such that 
\[
 \langle m^{(i)} ,\mathcal D (x)^{-1} m^{(k)} \rangle \neq 0, \quad \forall z \in B_\delta (\zeta). 
\]
The relation $\sim $ is an equivalence relation. Indeed, we have that for every $i \in \Omega$ it holds that $(i) \sim (i) $, because $\mathcal D(\zeta) $ is positive definite. Moreover the symmetry of $\mathcal D^{-1}(\zeta ) $ implies that $(i) \sim (j) $ iff $(j) \sim (i) $. Finally we prove the transitive property. We have to prove that the fact that $(i) \sim (j)  $ and $(j) \sim (k) $ implies $(i) \sim (k) $.
Indeed, notice that by the definition of $\mathcal D(\zeta) $ we have 
\[
 \frac{\partial }{\partial_{\zeta_j} }\mathcal D (\zeta)^{-1} = - \mathcal D (\zeta)^{-1}  \left( m^{(j)} \otimes m^{(j)} \right) \mathcal D (\zeta)^{-1}. 
\]
Therefore, if we assume that $\langle m^{(i)} ,  \mathcal D (\zeta)^{-1}   m^{(j)} \rangle  \langle  m^{(j)} , \mathcal D (\zeta)^{-1}  m^{(k)} \rangle \neq 0$, then 
\begin{align*}
\frac{\partial }{\partial_{\zeta_j} } \langle m^{(i)} ,\mathcal D (\zeta)^{-1} m^{(k)} \rangle  &= \langle m^{(i)} , \frac{\partial }{\partial_{\zeta_j} } \mathcal D (\zeta)^{-1} m^{(k)} \rangle = -   \langle m^{(i)} ,  \mathcal D (\zeta)^{-1}  \left( m^{(j)} \otimes m^{(j)} \right) \mathcal D (\zeta)^{-1}  m^{(k)} \rangle \\
&= -   \langle m^{(i)} ,  \mathcal D (\zeta)^{-1}   m^{(j)} \rangle  \langle  m^{(j)} , \mathcal D (\zeta)^{-1}  m^{(k)} \rangle \neq 0. 
\end{align*}
This in particular implies that for every $\delta >0 $ there exists a $ z \in B_\delta (\zeta) $ such that $\langle m^{(i)} ,\mathcal D (z)^{-1} m^{(k)} \rangle \neq 0$. 
Hence $(i) \sim (k) $. 
Assume instead that $\langle m^{(i)} ,  \mathcal D (\zeta)^{-1}   m^{(j)} \rangle  \langle  m^{(j)} , \mathcal D (\zeta)^{-1}  m^{(k)} \rangle =0$. Since $(i) \sim (j) $ and $(j) \sim (k) $ we know that for every $\delta >0 $ there exists a $ z \in B_\delta (\zeta) $ such that \[
\langle m^{(i)} ,  \mathcal D (z)^{-1}   m^{(j)} \rangle  \langle  m^{(j)} , \mathcal D (z)^{-1}  m^{(k)} \rangle \neq 0.
\]
The above computation shows that for every $\delta >0 $ there exists a $x \in B_\delta (z)$ such that $\langle \mathcal D (x)^{-1}  m^{(k)} \rangle \neq 0$. 
Notice that this implies that $(i) \sim (k)$.

We denote with $\mathbb M_1$ the equivalence class (corresponding to the equivalence relation $\sim $) that contains $m^{(1)}$. 
We want to prove that the fact that the kinetic system is $\mathcal M $-connected implies that, upon small perturbation of the reaction rates, we have $\mathbb M_1=\{ m^{(i)} \}_{i \in \Omega } $.
To this end we prove that if there exist more than one equivalence class induced by $\sim $, then the kinetic system $(\Omega, \mathcal R, \mathcal K ) $ is not $\mathcal M $-connected. 
So let us assume that there exists at least two equivalence classes on the set $\{ m^{(i)} \}_{i \in \Omega } $ induced by $\sim$. We define the following two sets
\[
W_1 := \{ y \in \mathbb R^L :\langle y, \mathcal D(\zeta)^{-1} m^{(k)}\rangle =0, \quad  \forall m^{(k)} \in \mathbb M_1  \} 
\]
and 
\[ 
W_2:= \{ y \in \mathbb R^L :\langle y, \mathcal D(\zeta)^{-1} m^{(k)}\rangle =0, \quad  \forall m^{(k)} \in \{ m^{(i)} \}_{i \in \Omega } \setminus  \mathbb M_1  \}.
\]
We now want to prove that $W_1 \cap W_2 = \{ 0\} $. 
To this end recall that, since the rank of the matrix $M $ is $L$, we have that there exists a subset $\{ m^{(i_j)} \}_{j=1 }^L  $ of $\{ m^{(i)} \}_{i \in \Omega } $ of linearly independent vectors. As a consequence the corresponding  vectors $\{ \xi^{(i_j)} \}_{j=1 }^L $ with $\xi^{(i_j)} := \mathcal D(\zeta)^{-1} m^{(i_j)}$ are also linearly independent. Indeed assume that $\{ \alpha_{j}\}_{j =1}^L $ is such that 
\[
0=\sum_{j =1}^L \alpha_j \xi^{(i_j)}=  \mathcal D(\zeta)^{-1} \sum_{j =1}^L \alpha_j m^{(i_j)}
\]
Since $\mathcal D(\zeta)^{-1} $ is invertible this implies that $ \sum_{j =1}^L \alpha_j m^{(i_j)}=0$. Hence $\alpha_j=0$ for every $j \in \{ 1, \dots, L \} $. 
As a consequence we deduce that if $y \in W_1 \cap W_2 = \{ 0\} $ then $\langle y, \xi^{(i_j)} \rangle=0$ for every $j \in \{ 1, \dots, L \} $. Hence $y=0$. 

We now prove that $W_1 \oplus W_2 = \mathbb R^L $. 
Without loss of generality assume that the number of vectors of the basis $\{ \xi^{(i_j)} \}_{j=1}^L $ contained in the class $\mathbb M_1 $ is given by $\gamma_1 $. As a consequence we have that $\dim W_1 = L- \gamma_1$ while the dimension of $W_2 = \gamma_1 $. As a consequence, since $W_1 \cap W_2 = \{ 0\} $ we deduce that $\dim (W_1+W_2)=L $, hence the desired conclusion follows and we obtain that $W_1 \oplus W_2 = \mathbb R^L$.  
 
Let us define $\Omega_1 =\{ k: m^{(k)} \in W_1 \} $ and $\Omega_2 =\{ k: m^{(k)}  \in W_2 \} $. Then by the definition of conservation law we have that
\[
0 = \sum_{j \in \Omega }   R(j) m^{(j) } = \sum_{j \in \Omega_1 }   \pi_{\Omega_1} R(j) m^{(j) } + \sum_{j \in \Omega_2 } \pi_{\Omega_2}  R(j) m^{(j) }. 
\]
Here we are using the notation $ \pi_A v \in \mathbb R^{|A|} $ 
\[
\pi_A v :=\left(  v(i)\right)_{i \in A}=(v( \min(A)), \dots,  v(\max A)),  
\]
where $A \subset \Omega $. 
This implies that 
\[ 
 \sum_{j \in \Omega_1 }   \pi_{\Omega_1} R(j) m^{(j) } =- \sum_{j \in \Omega_2 }   \pi_{\Omega_2}  R(j) m^{(j) } \in W_1 \cap W_2 = \{ 0\} . 
 \]
Notice that this implies that every vector $m_k$ that belongs to the extremal basis $\mathcal B $ can be written as the sum of two conservation laws, i.e. $m_k=(\pi_{\Omega_1} m_k, 0 ) + (\pi_{\Omega_2} m_k, 0 )$.

However, since $\mathcal B $ is a basis of extreme rays of $\mathcal M_+$ we have that this implies either that $\pi_{\Omega_1} m_k=0$ or that $\pi_{\Omega_2} m_k=0$. Hence this implies that $(\Omega , \mathcal R, \mathcal K ) $ is not $\mathcal M $-connected. 
 This concludes the proof of the fact that $\mathcal M $-connectivity implies that there exists a unique equivalence class induced by $\sim $.

In particular this implies that either 
\[
\langle \mathcal D(\zeta )  m^{(1)}, m^{(N) } \rangle \neq 0 
\]
or alternatively, if $\langle \mathcal D(\zeta )  m^{(1)}, m^{(N) } \rangle = 0$, then for every $\delta >0$ there exists a $z\in B_\delta (\zeta)$ such that
\[
\langle \mathcal D(z)  m^{(1)}, m^{(N) } \rangle \neq 0. 
\]
This implies the statement of the theorem. Indeed recall that $\zeta = e^{-E}$. Hence for any $\delta >0 $ we can construct the rate function $\mathcal K_\delta : \mathcal R \rightarrow \mathbb R_*$ in such a way that
\[
\frac{\mathcal K_\delta (-R) }{\mathcal K_\delta (R) }= e^{- \sum_{i \in \Omega } \overline E(i) R(i) } \quad  \forall R \in \mathcal R_s. 
\]
where $z=e^{- \overline E}$. Then the kinetic system $(\Omega, \mathcal R, \mathcal K_\delta)$ does not satisfy the adaptation property, because \eqref{adapt proof eq} fails. 
\end{proof}
\section{Some models exhibiting the adaptation property} \label{sec:examples}
In this section we revise some models of adaptation that can be found in the literature and we explain their relation with the results presented in this paper and some of the reults in \cite{franco2025reduction}. 

\subsection{Some classical models of adaptation with fine tuned parameters}
We start analysing the classical adaptation model proposed in \cite{segel1986mechanism} by Segel, Goldbeter, Devreotes, and Knox. 
One of the goals of this section is to illustrate that, even if the kinetic system in \cite{segel1986mechanism} is closed and satisfies the adaptation property, the results of that paper are consistent with Theorem \ref{thm:no adaptation in closed systems}. Indeed, the model in \cite{segel1986mechanism} satisfies the adaptation property only for fine-tuned parameters. 
A second goal of this section is to review briefly \cite{walz1987consequences}. This paper is interesting for us because it studies different enlarged kinetic systems that, when reduced in a suitable manner, give rise to the kinetic system studied in \cite{segel1986mechanism}. The results obtained in \cite{walz1987consequences} are consistent with our results on the completion and reduction of kinetic systems studied in \cite{franco2025reduction}.

The substances of the network in \cite{segel1986mechanism} are two type of receptors $D $ and $R$, the ligands  $L $ and two types of complexes ligand receptor $X=[LR]$ and $Y=[LD]$.
The signal in this case is the ligand. 
The set of the reactions taking place in the signalling system is 
\[
(L) + (R) \underset{k_{-r}}{\overset{k_r}{\leftrightarrows}}(X), \quad (L) + (D) \underset{k_{-d}}{\overset{k_d}{\leftrightarrows}} (Y), \quad (R) \underset{k_{-1}}{\overset{k_1}{\leftrightarrows}}  (D), \quad (X) \underset{k_{-2}}{\overset{k_2}{\leftrightarrows}}   (Y). 
\]

We have two conserved quantity in this system, i.e. the number of receptors is conserved and the number of ligands is conserved
\[
R+D+X+Y=R_0\ \text{ and }\  L+X+Y=L_0. 
\]
In particular this imply that the system is conservative. 
The reactions rates in \cite{segel1986mechanism} are assumed to satisfy the detailed balance property (see equation (29) in their paper). 
As a consequence, the kinetic system is closed.
Notice moreover that the conservation laws satisfy the $\mathcal M $-connectivity property. 
It is proven in \cite{segel1986mechanism} that this signalling kinetic system satisfies the adaptation property. However this is proven for fine-tuned parameters. Therefore the result in \cite{segel1986mechanism} is in agreement with Theorem \ref{thm:no adaptation in closed systems} in our paper, where we state that, unless the parameters are fine tuned or the conservation quantities have a specific structure, closed kinetic systems do not satisfy the adaptation property. 

Generalizations of the kinetic system above were also considered in \cite{walz1987consequences}, where different ways to complete the kinetic system studied in \cite{segel1986mechanism} are considered.
In particular we briefly review two of these completions. 
The first completion consist in assuming that the reactions $(R) \leftrightarrows (D) $ and $(X) \leftrightarrows (Y) $ are obtained starting from reactions involving two additional substances, for instance the substances $(A), (B) $, that in the reduced system studied in \cite{segel1986mechanism} are assumed to be constant in time. 
Then the completed reactions are of the form $(R) + (A) \leftrightarrows (D)+(B)  $ and $(X) + (A)  \leftrightarrows (Y) + (B) $. 
It is shown in \cite{walz1987consequences} that the detailed balance property of the kinetic system in \cite{segel1986mechanism} follows from the detailed balance of this completed systems.
The reason is that the cycles of the completed system and the kinetic system in \cite{segel1986mechanism} are the same, i.e. we have that the space of the cycles for both the systems is 
\[
 \operatorname{span} \{ (1,-1,-1,1)  \}. 
 \]
 This is consistent with Theorem 5.4 in \cite{franco2025reduction}. Indeed this theorem states that if we consider a system with the detailed balance property and we freeze the concentration of certain substances in the network in such a way that the reduced network (involving only the non constant concentrations) and the original network have the same cycles, then the detailed balance of the reduced network follows directly from the detailed balance of the original network. 

The second completion consist in assuming that the reactions $(R) \leftrightarrows (D) $ and $(X) \leftrightarrows (Y) $ are obtained as the reduction of reactions involving four additional substances, for instance the substances $A, B, W,Z $. Then the completed reactions are of the form $(R) + (A) \leftrightarrows (D)+(B)  $ and $(X) + (W)  \leftrightarrows (Y) + (Z) $. As mentioned in \cite{walz1987consequences}, the detailed balance property of the kinetic system in \cite{segel1986mechanism} can only be obtained assuming that the concentration of the additional substances $(A), (B), (W),(Z) $ are chosen at equilibrium values. Indeed, it is easy to notice that the completed kinetic system does not have cycles while the kinetic system in \cite{segel1986mechanism} has cycles. Notice that this is consistent with Proposition 5.6 in \cite{franco2025reduction} where it is proven that if the reduced system has more cycles than the completed systems then the only way to have that the detailed balance property of the reduced system holds in a robust manner is to choose the frozen concentrations at equlibirum values. 

\subsection{The Barkai-Leibler model of robust adaptation for bacterial chemotaxis}
In this section we briefly review a linear version of another classical model of robust adaptation, i.e. the Barkai-Leibler model of bacterial chemotaxis proposed in \cite{barkai1997robustness}. 
In contrast with the model in \cite{segel1986mechanism}, this model satisfies the property of adaptation in a robust manner. 

As we will see, the kinetic system studied in \cite{barkai1997robustness} is not a mass action kinetic system. The goal of this section is to give an idea on how this model can be derived starting from kinetic systems with mass action kinetics. 
To this end we need to make suitable assumptions on the speed of certain reactions taking place in the network. 

 A linear version of the model in \cite{barkai1997robustness} is given by the following system of ODEs
\begin{align} \label{adaptation BL}
\frac{dX}{dt}&= Y - (1-c) X + f(t) \\
\frac{dY}{dt}&= 1-c X, \nonumber
\end{align}
for a suitable $c>0$.
Here $X$ is the quantity of active receptors and $f$ is the function describing the evolution of the signal, i.e. the evolution of the concentration of ligands. Finally $Y$ is the response regulator protein. The product of this system will be the quantity
of active receptors. Notice that the system of ODEs \eqref{adaptation BL} does not correspond to a mass action kinetics. We briefly explain, without entering into the technical details, how this model can be obtained as a limit of mass action kinetic systems. 
Indeed, the system of ODEs above can be obtained from the following mass action kinetic system 
\begin{align}
   & X \overset{1}\rightarrow \emptyset, \quad  Y \overset{1}\rightarrow X, \quad   S\overset{1}\rightarrow X, \quad \emptyset  \overset{1}\rightarrow Y \label{non enzymes} \\
   & E+Y  \underset{k_{-1}}{\overset{k_1}\leftrightarrows} [yE],\quad  [yE]+ X\underset{k_{-2}}{\overset{k_2}\leftrightarrows}[xyE], \quad [xyE]\overset{\lambda }\rightarrow E+ P \label{kinetic system for mass action adaptation fine tuned}. 
\end{align}
Here $S$ represents the signal, $P$ the product of the enzymatic reactions, $E$ an enzyme, $[yE] $ is the complex formed by the enzyme and regulator protein, while $[xy E] $ is the complex formed by the enzyme, the regulator protein and the active receptor.
The system of ODEs corresponding to the enzymatic reactions in \eqref{kinetic system for mass action adaptation fine tuned} are of the following form 
\begin{align*} 
\frac{dX}{dt}&= k_{-2} [xy E]-k_2 X [yE] \\
\frac{dY}{dt}&= -k_{1} E Y + k_{-1} [yE] \\ 
\frac{dE}{dt}&= -k_{1} E Y + k_{-1} [yE] + \lambda [xyE] \\
\frac{d[yE]}{dt}&= k_{1} E Y - k_{-1} [yE] +k_{-2} [xy E]-k_2 X [yE]\\
\frac{d[xyE]}{dt}&= - k_{-2} [xy E]+ k_2 X [yE] -  \lambda [xyE]\\
\frac{dP }{dt}&=   \lambda [xyE].
\end{align*} 

We then assume that the reaction $ E+Y  \underset{k_{-1}}{\overset{k_1}\leftrightarrows} [yE]$ and the reaction $ [yE]+ X\underset{k_{-2}}{\overset{k_2}\leftrightarrows}[xyE] $ are very fast compared to the other reactions. This implies that the concentration of $Y$, $X$ and of $[yE] $ are essentially constant, as they reach steady state values very quickly. 
Therefore from the equations for the evolution of $Y$, $X$ and of $[yE] $ we deduce that 
\[
 [yE]= \alpha_{1} E Y, \ \text{ and } \  [xy E]=\alpha_2 X [yE] = \alpha_1 \alpha_2 EY X 
\]
where $\alpha_1 = \frac{k_1}{k_{-1} } $ and $\alpha_2 = \frac{k_2}{k_{-2} } $. 
Now notice that the system of ODEs above have a conserved quantity, i.e. we have that 
\[
E+ [yE] + [xy E] =C_0. 
\]
From this we deduce that
\[
E+ \alpha_{1} E Y+ \alpha_1 \alpha_2 EY X =C_0,
\]
hence $ E=\frac{C_0}{1+ \alpha_1 Y+\alpha_1 \alpha_2 YX}$ and therefore 
\[
[xy E]= \frac{ C_0  \alpha_1 \alpha_2 Y X}{1+ \alpha_1 Y+\alpha_1 \alpha_2 YX}. 
\]
Assume now that $\alpha_2 \ll 1$ and that $\lambda \alpha_2 \approx 1 $ (hence $\lambda \gg 1$) and that $X\approx 1 $ and $Y\approx 1 $. Then we deduce that
\[
\frac{dP }{dt}=   \lambda [xyE] \approx (\lambda \alpha_2) \frac{ C_0  \alpha_1 Y X}{1+ \alpha_1 Y+\alpha_1 \alpha_2 YX}\approx \lambda \alpha_2 \frac{ C_0   Y X}{\frac{1}{\alpha_1}+ Y}.
\]
If in addition we assume that $\alpha_1 \gg 1 $ we obtain that 
\[
\frac{dP }{dt}\approx \lambda \alpha_2 C_0 X. 
\]
Under the assumptions above the enzymatic reactions \eqref{kinetic system for mass action adaptation fine tuned} can be reduced to the reaction 
\[
Y \rightarrow P. 
\]
where the rate of the reaction depends on $X$ and on the total number of enzymes, i.e. on $C_0$, more precisely the rate of the reaction is just $\lambda \alpha_2 C_0 X $. 
As a consequence, taking into account also of \eqref{non enzymes} we obtain the system of ODEs \eqref{adaptation BL}, where $c= \lambda \alpha_2 C_0$. 

\subsection{An adaptation model of gene expression}
In this section we study one of the models of adaptation considered in \cite{ferrell2016perfect}. The model that we are going to discuss, as well as all the models in \cite{ferrell2016perfect}, are one directional networks. Instead in this paper we mostly deal with bidirectional chemical networks. 
In this section we clarify the relation between bidirectional kinetic systems and one directional kinetic systems. More precisely we want to explain heuristically that a one directional kinetic system can be obtained as the limit of the bidirectional kinetic systems that do not satisfy the detailed balance property and that are such that their "lack of detailed balance tends to infinity". 
Finally we explain how it is possible to write a completion for one of the models studied in \cite{ferrell2016perfect}. 

Consider a bidirectional network $(\Omega , \mathcal R) $.
We know that we can always construct reaction rates that satisfy the detailed balance property, indeed to this end it is enough to select a free Gibbs energy $E$. Then the rates that satisfy the detailed balance property can be defined as 
\[ 
\log\left( \frac{K^{(DB)}_{-R}}{K^{(DB)}_{R}} \right) = \sum_{i \in \Omega } R(i) E(i) 
\]
for every $R \in \mathcal R$. 
On the other hand, assume that $\{ K_R\}_{R \in \mathcal R} $ are a set of rates that do not satisfy the detailed balance. 
Then we can define a vector $\Delta \in \mathbb R^{r/2}$ that is such that 
\[ 
\log\left( \frac{K_{-R}}{K_{R}} \right) =\Delta(R)+  \sum_{i \in \Omega } R(i) E(i) \ \iff \ K_{-R}  = K_{R} e^{\Delta(R)}  e^{ \sum_{i \in \Omega } R(i) E(i)}. 
\]
The vector $\Delta $ can be thought as the measure of the lack of detailed balance. Assume that for every $R \in \mathcal R $ we have that $\Delta (R) \gg 1 $, then we have that $K_{-R} \gg K_R $. Hence we can approximate the bidirectional kinetic system with a one directional kinetic system. 

Let us then consider the following chemical network, which is the bidirectional version of the chemical network in Figure 5 in \cite{ferrell2016perfect}
\begin{equation} \label{k system adapt with cycles}
(1)+ (2) \leftrightarrows (3), \ (2) \leftrightarrows \emptyset, \  (3) \leftrightarrows (4), \ (4)+(5) \leftrightarrows (6), \ (5) \leftrightarrows \emptyset, \  (2) \leftrightarrows (5). 
\end{equation}

We assume that the chemical network \eqref{k system adapt with cycles} is endowed with reaction rates that do not satisfy the detailed balance property. We stress that it is shown in \cite{ferrell2016perfect} that this model satisfies the adaptation property. This is consistent with our results, indeed, since the kinetic system does not have detailed balance, then it can satisfy the adaptation property. 
We now explain that this network can be obtained as a reduction of a closed kinetic system in which some substances are assumed to be constant in time.
Hence we complete the kinetic system following the proof of Theorem 6.3 in \cite{franco2025reduction}.
First of all notice that we can add four substances $(7), (8), (9) , (10)$ to the network. 
Then we can consider the following system of extended reactions. 
\begin{equation} \label{k system adapt with cycles no sinks and sources}
(1)+ (2) \leftrightarrows (3), \ (2) +(7)\leftrightarrows (8), \  (3) \leftrightarrows (4), \ (4)+(5) \leftrightarrows (6), \ (5) +(9)\leftrightarrows (10), \  (2) \leftrightarrows (5). 
\end{equation}
Notice that the kinetic system \eqref{k system adapt with cycles no sinks and sources} has no cycles, hence for every choice of rates is satisfies the detailed balance property. 
Moreover it is easy to check that $ m =(1,3,2,2,3,5,4,1,4)$ is a conserved quantity. Hence \eqref{k system adapt with cycles no sinks and sources} is a closed completion.

\vspace{1cm}
 
   \textbf{Acknowledgements:} The authors gratefully acknowledge the support by the Deutsche Forschungsgemeinschaft (DFG) through the collaborative research centre "The mathematics of emerging effects" (CRC 1060, Project-ID 211504053) and Germany's Excellence StrategyEXC2047/1-390685813.
The funders had no role in study design, analysis, decision to publish, or preparation of the manuscript.

\bibliographystyle{siam}

\bibliography{References}
\end{document}